\theoremstyle{plain}
\newtheorem{thm}{Theorem}[section]
\newtheorem{pro}[thm]{Proposition}
\newtheorem{lem}[thm]{Lemma}
\newtheorem{cor}[thm]{Corollary}
\theoremstyle{definition} 
\newtheorem{exa}[thm]{Example}
\newtheorem{exas}[thm]{Examples}
\newtheorem{defn}[thm]{Definition}
\newtheorem{rem}[thm]{Remark}
\newtheorem{lem-defn}[thm]{Lemma-Definition}
\renewcommand{\qed}{\begin{flushright} {\bf Q.E.D.}\ \ \ \ \
                  \end{flushright} }
\newcommand{\hs}{\hspace{.2in}}
\newcommand{\IC}{\mathbb{C}}
\newcommand{\IK}{\mathbb{K}}
\newcommand{\IN}{\mathbb{N}}
\newcommand{\IR}{\mathbb{R}}
\newcommand{\XX}{\mathfrak{X}}
\def \g  {\mathfrak{a}}
\def \b  {\mathfrak{b}}
\def \g  {\mathfrak{g}}
\def \i  {\mathfrak{i}}
\def \n  {\mathfrak{n}}
\def \p  {\mathfrak{p}}
\def \u  {\mathfrak{u}}
\def \t  {\mathfrak{t}}
\def \ulamb  {\underline{\lambda}}
\def \umu  {\underline{\mu}}
\def \C  {{\mathcal{C}}}
\def \H  {{\mathcal{H}}}
\def \O  {{\mathcal{O}}}
\def \P  {{\mathcal{P}}}
\def \R  {{\mathcal{R}}}
\def \U  {{\mathcal{U}}}
\DeclareMathOperator{\Ch}{Ch}
\DeclareMathOperator{\Der}{Der}
\DeclareMathOperator{\diag}{diag}
\DeclareMathOperator{\Mix}{Mix}
\DeclareMathOperator{\Twi}{Twi}
\def \sA {{\scriptscriptstyle A}}
\def \sH {{\scriptscriptstyle H}}
\def \sJ {{\scriptscriptstyle J}}
\def \sU {{\scriptscriptstyle U}}
\def \sV {{\scriptscriptstyle V}}
\def \sW {{\scriptscriptstyle W}}
\def \sX {{\scriptscriptstyle X}}
\def \sR {{\scriptscriptstyle R}}
\newcommand{\la}{\langle}
\newcommand{\ra}{\rangle}
\newcommand{\lara}{\la \, , \, \ra}
\newcommand{\kh}{\IK[[\hslash]]}
\newcommand{\ch}{\IC[[\hslash]]}
\newcommand{\Rep}{\R ep}
\newcommand{\pist}{\pi_{\rm st}}
\newcommand{\piG}{{\pi_{\scriptscriptstyle G}}}
\begin{document}

\setlength{\baselineskip}{1.2\baselineskip}

\title{Quantization of a Poisson structure on products of principal affine spaces}
\author[$\star$]{Victor Mouquin}
\affil[$\star$]{
School of Mathematical Sciences, Shanghai Jiaotong University, Shanghai, China, mouquinv@sjtu.edu.cn}
\date{}

\maketitle


\begin{abstract}
We give the analogue for Hopf algebras of the polyuble Lie bialgebra construction by Fock and Rosli. By applying this construction to the Drinfeld-Jimbo quantum group, we obtain a deformation quantization $\IC_\hslash[(N \backslash G)^m]$ of a Poisson structure $\pi^{(m)}$ on products $(N \backslash G)^m$ of principal affine spaces of a connected and simply connected complex semisimple Lie group $G$. The Poisson structure $\pi^{(m)}$ descends to a Poisson structure $\pi_m$ on products $(B \backslash G)^m$ of the flag variety of $G$ which was introduced and studied by the Lu and the author. Any ample line bundle on $(B \backslash G)^m$ inherits a natural flat Poisson connection, and the corresponding graded Poisson algebra is quantized to a subalgebra of $\IC_\hslash[(N \backslash G)^m]$. 

We define the notion of a strongly coisotropic subalgebra in a Hopf algebra, and explain how strong coisotropicity guarantees that any homogeneous coordinate ring of a homogeneous space of a Poisson Lie group can be quantized in the sense of Ciccoli, Fioresi, and Gavarini. 
\end{abstract}

\section{Introduction} \label{sec-intro}

Let $N \backslash G$ be the principal affine space of a connected and simply connected complex semisimple Lie group $G$, where $N$ is the unipotent radical of a Borel subgroup $B$. For each integer $m \geq 1$, we introduce in this paper a Poisson structure $\pi^{(m)}$ on $(N\backslash G)^m$ with the following properties: the diagonal action of $G$ on $((N \backslash G)^m, \pi^{(m)})$ is Poisson when $G$ is equipped with $\pi_{st}^{(1)}$, the standard multiplicative Poisson structure; $\pi^{(m)}$ is the quotient structure of the $m$'th polyuble Poisson Lie group $(G^m, \pi_{st}^{(m)})$, a term explained in $\S$\ref{subsec-poiss-products}; and $\pi^{(m)}$ descends to a Poisson structure $\pi_m$ on $(B \backslash G)^m$ which was extensively studied in \cite{Elek-Lu:BS, Lu-Mou:double-B-cell, Lu-Mou:mixed}, and as a consequence, any ample line bundle on $((B \backslash G)^m, \pi_m)$ admits a flat Poisson connection. 

The geometry of $(N\backslash G)^m$ is intimately related to the representation theory of $G$, or its Langland dual $G^L$. In \cite{gonch-shen}, Goncharov and Shen construct parametrization sets of canonical bases in invariants of tensor products of representations of $G^L$ from the tropical geometry of $(N\backslash G)^m/G$. An interesting problem is to relate the Poisson geometry developed in this paper to their work, as well as to the cluster structure introduced by Fock and Goncharov \cite{FG:moduli-space}.

Polyubles of quasitriangular Lie bialgebras, which are generalizations of doubles of Lie bialgebras, were first introduced by Fock-Rosli \cite{FR2} and further studied by J.-H. Lu and the author \cite{Lu-Mou:mixed}. Given a quasitriangular Hopf algebra $(H, R)$, we give in this paper the Hopf algebra analogue $(H_\sR^{(m)}, R^{(m)})$ of this construction. We also construct $(H_\sR^{(m)}, R^{(m)})$-module algebras which are locally factored in the sense of Etingof and Kazhdan  \cite{quant-li-bialg-III}, and are quantum analogues of the mixed product Poisson structures studied in \cite{Lu-Mou:mixed}. 

Applying this general theory to the Drinfeld-Jimbo QUE algebra $U_\hslash(\g)$, we obtain a Hopf algebra $\H^{(m)}$ which is a quantization of the Poisson algebra $(\IC[G^m], \{ \:, \:\}_{r_{st}}^{(m)})$ of regular functions on $(G^m, \pi_{st}^{(m)})$. Moreover, we identify a subalgebra $\IC_\hslash[(N \backslash G)^m]$ of $\H^{(m)}$ which is graded by $(\P_+)^m$, where $\P_+$ is the monoid of dominant weights of the maximal torus $T$ of $G$, and which is quantization of the Poisson algebra of regular functions on $((N \backslash G)^m, \pi^{(m)})$. Flat Poisson connections on ample line bundles over $((B \backslash G)^m, \pi_m)$ induce a graded Poisson bracket on the corresponding graded algebras, and in particular, any of those graded Poisson algebras are quantized to graded subalgebras of $\IC_\hslash[(N \backslash G)^m] $. We thus recover a particular example of a result from \cite{3italians} (albeit over $\ch$ rather than $\IC[q, q^{-1}]$) quantizing homogenous coordinate rings of homogeneous spaces of Poisson Lie groups. 

In \cite{3italians}, Ciccoli, Fioresi, and Gavarini construct quantizations of homogeneous coordinate rings using prequantum sections satisfying a certain involutivity condition, see $\S$\ref{subsec-3italians} and  \cite[Section 3]{3italians}. We introduce the definition of a (right or left) {\it strongly coisotropic subalgebra} of a Hopf algebra, and show that strong coisotropicity implies that any homogeneous coordinate ring of a homogeneous space of a Poisson Lie group can be quantized in the sense of \cite{3italians}. 

This paper is organized as follows: $\S$\ref{sec-basic} is a recall of the notion of Lie bialgebras and Poisson action thereof. In $\S$\ref{sec-prod-G/N} we construct the Poisson structure $\pi^{(m)}$ on $(N \backslash G)^m$ as an application of the theory in \cite{Lu-Mou:mixed}, and show that when $\IC[(N \backslash G)^m]$ is equipped with the induced Poisson bracket $\{ \:, \:\}^{(m)}$, it is a Poisson subalgebra of $(\IC[G^m], \{ \:, \:\}_{r_{st}}^{(m)})$. In $\S$\ref{sec-strong-coiso-liebialg} we define strongly coisotropic subalgebras of Lie bialgebras, a notion which generalizes to the graded Poisson algebra setting that of coisotropic subalgebras. In particular, with $\b$ being the Lie algebra of $B$, $\b^n$ is a strongly coisotropic subalgebra of the Lie bialgebra $(\g^m, r_{st}^{(m)})$ of $(G^m, \pi_{st}^{(m)})$, which implies that the Poisson bracket $\{ \:, \:\}^{(m)}$ on $\IC[(N \backslash G)^m]$ is graded by $(\P_+)^m$.

Section $\S$\ref{sec-hopf} is a recall on Hopf algebras, and in $\S$\ref{sec-twisted}, we develop in the Hopf algebra setting the analog of the  twisted $m$-fold product of a quasitriangular Lie bialgebra $(\g, r)$ introduced in \cite{Lu-Mou:mixed}. The quantum analogue of a mixed product Poisson structure with a Poisson action of $(\g, r)$ is a module algebra over a QUE algebra which is locally factored, and in $\S$\ref{sec-quantum-NbackslashG} the algebra $\IC_\hslash[(N \backslash G)^m]$ is obtained as such an example. We define in $\S$\ref{sec-strong-coiso-hopf} strongly coisotropic subalgebras of Hopf algebras which are analogues of strongly coisotropic subalgebras of Lie bialgebras, and explain how strong coisotropicity guarantees that any homogeneous coordinate ring of a homogeneous space of a Poisson Lie group can be quantized in the sense of \cite{3italians}.

\subsection{Notation} \label{subsec-nota}

The canonical pairing between a vector space and its dual is denoted by $\lara$. 

All algebras in this paper are finitely generated unital associative algebras over a field of zero characteristic. If $A$ is an algebra, we denote its unit element by $1_\sA$, its multiplication map by $\mu_\sA:A \otimes A \to A$, and when no confusion is possible, multiplication is also written as concatenation, $\mu_\sA(a_1 \otimes a_2) = a_1a_2$. If $R \in A^{\otimes l}$, $l \geq 1$, we use the standard notation $R_{i_1 i_2 \cdots i_l}$ to denote the image of $R$ in $A^{\otimes k}$, $k \geq l$, under the embedding $A^{\otimes l} \hookrightarrow A^{\otimes k}$ as the $(i_1, i_2, \ldots, i_l)$ components, $1 \leq i_1, i_2, \ldots, i_l \leq k$. 

If $G$ is a Lie group with Lie algebra $\g$ and $x \in \g^{\otimes l}$, $l \geq 1$, we denote respectively by $x^L$ and $x^R$ the left and right invariant tensor fields whose value at the identity $e \in G$ is $x$. 
 
\subsection*{Acknowledgements}

The author would like to thank Jiang-Hua Lu for her valuable suggestions and stimulating discussions, as well as Milen Yakimov and Nicola Ciccoli, for answering his numerous questions. The author is also thankful to the anonymous referee, for his many suggestions which have improved the paper.


\section{Lie bialgebras} \label{sec-basic}

We recall basic facts concerning Lie bialgebras and quasitriangular $r$-matrices, and set up our notation. All the material in this section is standard and we refer to \cite[Section 2.1]{Lu-Mou:mixed} and \cite[Section 2.4]{Lu-Mou:mixed} for details.

\subsection{Lie bialgebras and quasitriangular $r$-matrices} \label{subsec-ie-bial-quasi-tri}

Let $\g$ be a finite dimensional Lie algebra over a field $\IK$ of zero characteristic.  A {\it Lie bialgebra structure} on $\g$ is a map $\delta_\g: \g \to \wedge^2 \g$ satisfying the cocycle condition 
$$
\delta_\g[x,y] = [x, \delta_\g(y)] + [\delta_\g(x),y], \hs x,y \in \g,
$$
and such that the dual map $\delta_\g^*: \wedge^2 \g^* \to \g^*$ is a Lie bracket on $\g^*$, and one says that $(\g, \delta_\g)$ is a {\it Lie bialgebra}. A {\it quasitriangular $r$-matrix} on $\g$ is an element $r \in \g \otimes \g$ whose symmetric part $\Omega = \frac{1}{2}(r + r_{21})$ is $\g$-invariant, and which satisfies the {\it Classical Yang-Baxter Equation} in $U(\g)^{\otimes 3}$
$$
[r_{12}, r_{13}] + [r_{12}, r_{23}] + [r_{13}, r_{23}] = 0.
$$
A quasitriangular $r$-matrix $r \in \g \otimes \g$ defines a Lie bialgebra structure 
$$
\delta_r: \g \to \wedge ^2 \g, \hs \delta_r(x) = [x,r] \hs x \in \g,
$$
where $\delta_r(x)$ indeed lies in $\wedge^2 \g \subset \g \otimes \g$ for every $x \in \g$, since $\Omega$ is $\g$-invariant. In particular, $\delta_r$ only depends on the anti-symmetric part $\Lambda = \frac{1}{2}(r - r_{21})$ of $r$. A {\it quasitriangular Lie bialgebra} is a Lie bialgebra $(\g, \delta_\g)$ such that $\delta_\g = \delta_r$ for a quasitriangular $r$-matrix $r \in \g \otimes \g$, and one says that $r$ is a {\it quasitriangular structure} for $(\g, \delta_\g)$. We also denote quasitriangular Lie bialgebras as pairs $(\g, r)$.

\subsection{Poisson actions and twists} \label{subsec-poiss-act}

A Poisson algebra is a pair $(A, \{ \:, \:\})$ where $A$ is a commutative algebra over $\IK$, and $\{ \:, \:\}$ a $\IK$-linear Lie bracket on $A$ which is a derivation in each factor. A (right) {\it Poisson action} of a Lie bialgebra $(\g, \delta_\g)$ on a Poisson algebra $(A, \{ \:, \:\})$ is a Lie algebra morphism $\rho: \g \to \Der(A)$, where $\Der(A)$ is the Lie algebra of derivations of $A$, satisfying
\begin{equation} \label{poiss-ac-alg}
\rho(\delta_\g(x))(f \wedge g) = \rho(x)(\{f, g\}) - (\{\rho(x)(f), g\} + \{f, \rho(x)(g)\}), \hs x \in \g, \; f,g \in A.  
\end{equation}
When $\IK = \IR$ or $\IC$ and $A$ is the algebra of smooth or holomorphic functions on a real or complex Poisson manifold, \eqref{poiss-ac-alg} coincides with the usual notion of a Poisson action of a Lie bialgebra. A {\it twisting element} of $(\g, \delta_\g)$ is an element $t \in \wedge^2 \g$ satisfying 
$$
\delta_\g(t) + \frac{1}{2}[t,t] = 0,
$$
so that $\delta_{\g,t}: \g \to \wedge^2 \g$ defined by 
$$
\delta_{\g,t}(x) = \delta_\g(x) + [t,x], \hs x \in \g,
$$
is a Lie bialgebra structure on $\g$, and we call $(\g, \delta_{\g,t})$ the {\it twist} of $(\g, \delta_\g)$ by $t$. Given a Poisson action $\rho: \g \to \Der(A)$ of a Lie bialgebra $(\g, \delta_\g)$ on a Poisson algebra $(A, \{ \:, \:\})$ and a twisting element $t$ of $(\g, \delta_\g)$, the bracket $\{ \:, \:\}_t$ on $A$ given by 
$$
\{f,g\}_t = \{f,g\} -\rho(t)(f \wedge g), \hs f,g \in A, 
$$
is then a Poisson bracket, $\rho: \g \to \Der(A)$ is a Poisson action of $(\g, \delta_{\g,t})$ on $(A, \{ \:, \:\}_t)$, and one says that $(A, \{ \:, \:\}_t)$ is the {\it twist of  $(A, \{ \:, \:\})$ by $\rho(t)$}. Moreover, if $(\g, \delta_\g)$ has quasitriangular structure $r \in \g \otimes \g$, $r - t$ is a quasitriangular structure for $(\g, \delta_{\g,t})$. 

Let $\g$ be a Lie algebra, $r = \sum_i x_i \otimes y_i \in \g \otimes \g$ a quasitriangular $r$-matrix on $\g$, and $m \geq 1$ an integer. For $1 \leq j \leq m$ and $x \in \g$, we denote by $(x)_j \in \g^m$ the image of $x$ under the embedding of $\g$ into $\g^m$ as the $j$'th component. Then 
\begin{equation} \label{eq_Mix}
\Mix^m(r)  = \sum_{1 \leq k < l \leq m} \Mix_{k,l}^m(r) \in \g^m \wedge \g^m, \;\; \text{with} \;\; \Mix_{k,l}^m(r) = \sum_i (y_i)_k \wedge (x_i)_l \in \g^m \wedge \g^m,
\end{equation}
is a twisting element of the direct product Lie bialgebra $(\g, r)^m$, and 
$$
r^{(m)} = (r, \ldots, r) - \Mix^m(r) \in \g^m \otimes \g^m,
$$
is a quasitriangular $r$-matrix for the twist of $(\g, r)^m$ by $\Mix^m(r)$. The diagonal embedding 
$\diag_m: (\g, \delta_r) \hookrightarrow (\g^m, \delta_{r^{(m)}})$ is a morphism of Lie bialgebras, and we call $(\g^m, r^{(m)})$ the  {\it twisted $m$-fold product} of $(\g, r)$. Our definition of $r^{(m)}$ differs slightly from the corresponding definition given in \cite[Section 6]{Lu-Mou:mixed}. See \cite[Remark 6.4]{Lu-Mou:mixed} for an explanation of this discrepancy. When $\rho: \g \to \Der(A)$ is a Poisson action of $(\g, r)$ on a Poisson algebra $(A, \{ \:,\:\})$, the twist of the direct product Poisson algebra $(A, \{ \:,\:\})^{\otimes m}$ by $\rho^{(m)}(\Mix^m(r))$ is called a {\it mixed product Poisson structure} in \cite{Lu-Mou:mixed}, where 
\begin{equation} \label{eq-defn-rho^m}
\rho^{(m)}: \g^m \to \Der(A)^m, \hs \rho^{(m)}(x_1, \ldots, x_m) = (\rho(x_1), \ldots, \rho(x_m)), \hs x_i \in \g. 
\end{equation}


\section{A Poisson structure on products of principal affine spaces} \label{sec-prod-G/N}


\subsection{Semisimple Lie algebras and the standard quasitriangular $r$-matrix} \label{subsec-std-PL}

Let $\g$ be a complex semisimple Lie algebra. We recall the standard quasitriangular $r$-matrix $r_{st}$ on $\g$ and refer to  \cite[Section 4]{Lu-Mou:double-B-cell} for details.

Fix a Cartan subalgebra $\t \subset \g$, let $\triangle \subset \t^*$ be the root system of $\g$ with respect to $\t$, and for $\alpha \in \triangle$, let $\g_\alpha \subset \g$ be the corresponding root space. Fix a system $\triangle_+ \subset \triangle$ of positive roots and let $\n = \sum_{\alpha \in \triangle_+} \g_{\alpha}$, $\n_- = \sum_{\alpha \in \triangle_+} \g_{-\alpha}$ be the nilpotent radicals of the pair $(\b = \t + \n, \: \b_- = \t + \n_-)$ of opposite Borel subalgebras of $\g$. Let $\lara_\g$ be the Killing form of $\g$ and denote by $\lara$ the bilinear form on both $\t$ and $\t^*$ induced by $\lara_\g$. For every positive root $\alpha \in \triangle_+$, choose root vectors $e_\alpha \in \g_\alpha$, $e_{-\alpha} \in \g_{-\alpha}$, such that $\la e_\alpha, e_{-\alpha} \ra_\g = 1$. Let $\{h_i\}_{i =1}^k$ be a basis of $\t$ orthonormal with respect to $\lara$, and let 
$$
r_0 = \frac{1}{2} \sum_{i=1}^k h_i \otimes h_i \in \t \otimes \t, \hs \text{and} \hs r_{st} = r_0 + \sum_{\alpha \in \triangle_+} e_{-\alpha} \otimes e_\alpha \in \g \otimes \g. 
$$
Then $r_{st}$ is a quasitriangular $r$-matrix, called the {\it standard quasitriangular $r$-matrix} associated to the pair $(\b, \b_-)$, and its antisymmetric part
$$
\Lambda_{st} =  \frac{1}{2}\sum_{\alpha \in \triangle_+} e_{-\alpha} \wedge e_\alpha \in \wedge^2 \g 
$$
is the {\it standard skew-symmetric $r$-matrix}  associated to $(\b, \b_-)$.

We label the simple roots in $\triangle_+$ by $\{\alpha_i : i = 1,  \ldots, k\}$. For $1 \leq i \leq k$, let $\varpi_i \in \t^*$ be the fundamental weight corresponding to $\alpha_i$,  and let 
$$
\P_+ = \{\sum_{i =1}^k n_i \varpi_i : n_i \in \IN_0 \}
$$
be the free monoid of dominant integral weights. Let $W$ be the Weyl group of $G$ and $w_0 \in W$ the longest element in $W$. For $\varpi \in \P_+$, we denote by $V(\varpi)$ the irreducible representation of $\g$ of highest weight $\varpi$, and recall that one has $V(\varpi)^* = V(-w_0(\varpi))$ as $\g$-modules.

\subsection{Principal affine spaces}

Let $G$ be the connected and simply connected complex semisimple algebraic group with Lie algebra $\g$, and let $B \subset G$ be the Borel subgroup with Lie algebra $\b \subset \g$. Let $N \subset B$ be the unipotent radical of $B$ and let $T \subset B$ be the maximal torus of $G$ with Lie algebra $\t \subset \b$. We recall in this subsection the representation theory of $\IC[N \backslash G]$. Our reference is \cite[Chapter 12]{goodman-wallace}. 

Ample line bundles on the flag variety $B \backslash G$ are parametrized by $\P_+$, where for $\varpi \in \P_+$, $L(\varpi)$ is the line bundle over $B \backslash G$ defined as the quotient of $\IC \times G$ by the left action of $B$ given by 
$$
b \cdot (z, g) = (b^{-\varpi}z, bg), \hs g \in G, \; b \in B, \; z \in \IC,
$$
where $b^{\varpi} = e^{\varpi(x)}$ if $b = \exp(x)n$, with $n \in N$ and $x \in \t$. Via pull back by the quotient map $G \to B \backslash G$, one can identify the global sections of $L(\varpi)$ with 
$$
\IC[G]^{\varpi} : = \{f \in \IC[G] : f(bg) = b^{-\varpi} f(g), \; g \in G, \; b \in B\}. 
$$
Under the left action of $G$ on $\IC[G]$ given by $(g_1 \cdot f)(g_2) = f(g_2g_1)$, $g_i \in G$,  $\IC[G]^{\varpi}$ is $G$-invariant and isomorphic to $V(-w_0(\varpi))  \cong V(\varpi)^*$. Indeed, for any finite dimensional representation $V$ of $G$, one has the {\it matrix coefficients}
$$
c^\sV_{\xi, v} \in \IC[G], \hs c^\sV_{\xi, v}(g) = \la \xi, \: g^{-1} \cdot v \ra = \la g \cdot \xi, \: v \ra, \hs g \in G,
$$
defined for $v \in V$ and $\xi \in V^*$, and the map 
\begin{equation} \label{eq-Phi-varpi}
\Phi_\varpi: V(\varpi) \to \IC[G]^{-w_0(\varpi)}, \hs \Phi_\varpi(\xi) =c^{\scriptscriptstyle V(-w_0(\varpi))}_{\xi, v}, \hs \xi \in V(\varpi), 
\end{equation}
is a $G$-equivariant isomorphism, where $v \in V(-w_0(\varpi)) \cong V(\varpi)^*$ is any fixed highest weight vector. The {\it principal affine space} $N \backslash G$ has affine coordinate ring 
$$
\IC[N \backslash G] = \{f \in \IC[G] : f(ng) = f(g), \; g \in G, \; n \in N\},
$$
and one has 
$$
\IC[N \backslash G] = \bigoplus_{\varpi \in \P_+} \IC[G]^{\varpi}. 
$$

\subsection{A Poisson structure on products of principal affine spaces} \label{subsec-poiss-products}

Let $r_{st}$ be the quasitriangular $r$-matrix defined in $\S$\ref{subsec-std-PL}. For any integer $m \geq 1$, one has the $m$'th {\it polyuble Poisson Lie group} $(G^m, \pi_{st}^{(m)})$ \cite{FR2, Lu-Mou:mixed}, where 
\begin{align} \label{eq-brack-r^(m)}
\pi_{st}^{(m)} =  (r_{st}^{(m)})^L - (r_{st}^{(m)})^R = (\Lambda_{st}^{(m)})^L - (\Lambda_{st}^{(m)})^R, 
\end{align}
and
$$
\Lambda_{st}^{(m)} = (\Lambda_{st}, \ldots, \Lambda_{st}) - \Mix^m(r_{st})  \in \wedge^2 \g^m
$$ 
is the antisymmetric part of $r_{st}^{(m)}$. In particular $(G^m, \pi_{st}^{(m)})$ has Lie bialgebra $(\g^m, r_{st}^{(m)})$, thus 
$$
\sigma_m: \g^m \oplus \g^m \to \XX^1(G^m), \hs \sigma_m(y,x) = y^L-x^R, \hs x,y \in \g^m, 
$$
is a Poisson action of $(\g^m, r_{st}^{(m)}) \oplus (\g^m, -r_{st}^{(m)})$ on $(\IC[G^m], \{ \:, \:\}_{r_{st}}^{(m)})$, where $\{ \:, \:\}_{r_{st}}^{(m)}$ is the Poisson bracket induced by $\pi_{st}^{(m)}$. 

We first treat the $m = 1$ case. For $\varpi, \lambda \in \P_+$, let 
$$
p_{\varpi, \lambda}: V(\varpi) \otimes V(\lambda) \cong V(\lambda) \otimes V(\varpi) \to V(\varpi +\lambda)
$$ 
be the projection along the $\g$-submodules of highest weight less than $\varpi + \lambda$.

\begin{pro} \label{pro-G/N}
1) For any $f,g \in \IC[N \backslash G]$, one has 
\begin{equation} \label{pi_G/N}
\{f,g\}^{(1)}_{r_{st}} = \Lambda_{st}^L(f \wedge g) \in \IC[N \backslash G],
\end{equation}
i.e $\pist^{(1)}$ descends to a well defined Poisson structure $\pi^{(1)}$ on $N \backslash G$. 

2) Let $\varpi, \lambda \in \P_+$ and fix highest vectors $v \in V(\varpi)^*$, $w \in V(\lambda)^*$, and recall the isomorphisms defined in \eqref{eq-Phi-varpi}. For $\xi \in V(\varpi)$ and $\mu \in V(\lambda)$, one has 
\begin{equation} \label{pi_G/N-repr}
\{\Phi_\varpi(\xi), \Phi_\lambda(\mu)\}^{(1)}_{r_{st}} = \Phi_{\varpi + \lambda}(p_{\varpi, \lambda}(\Lambda_{st}\cdot (\xi \otimes \mu))). 
\end{equation}
In particular, $\IC[N \backslash G]$ is a $\P_+$-graded Poisson subalgebra of $(\IC[G], \{ \:, \:\}^{(1)}_{r_{st}})$. 
\end{pro}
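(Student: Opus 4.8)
The plan is to deduce everything from two inputs: a soft one — left $N$-invariant functions are killed by right-invariant vector fields along $\n$ — and one genuinely representation-theoretic one — the product of matrix coefficients computes the Cartan projection $p_{\varpi, \lambda}$. Part 1) is purely soft; part 2) uses the product rule; and the final ``in particular'' is a formal consequence of 2).

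\emph{Part 1).} Since $N$ is connected, $f \in \IC[G]$ is left $N$-invariant iff $x^R(f) = 0$ for all $x \in \n$; in particular every $f \in \IC[N \backslash G]$ is annihilated by $e_\alpha^R$ for every $\alpha \in \triangle_+$, because $e_\alpha \in \g_\alpha \subset \n$. Expanding $\Lambda_{st}^R(f \wedge g) = \frac{1}{2}\sum_{\alpha \in \triangle_+}\big(e_{-\alpha}^R(f)\, e_\alpha^R(g) - e_\alpha^R(f)\, e_{-\alpha}^R(g)\big)$, each summand carries a factor $e_\alpha^R(f)$ or $e_\alpha^R(g)$ with $\alpha \in \triangle_+$, hence vanishes on $\IC[N \backslash G]$. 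As the symmetric part $\Omega$ of $r_{st}$ is $\g$-invariant, $\pi_{st}^{(1)} = \Lambda_{st}^L - \Lambda_{st}^R$ by \eqref{eq-brack-r^(m)}, so $\{f,g\}^{(1)}_{r_{st}} = \Lambda_{st}^L(f \wedge g)$ for $f, g \in \IC[N \backslash G]$. To see this function again lies in $\IC[N \backslash G]$, I would use that left- and right-invariant vector fields commute: for $x \in \n$ and $f, g \in \IC[N \backslash G]$ one has $x^R\big(e_{\pm\alpha}^L(f)\big) = e_{\pm\alpha}^L\big(x^R(f)\big) = 0$, so the Leibniz rule applied to the expansion of $\Lambda_{st}^L(f \wedge g)$ in the $e_{\pm\alpha}^L$ gives $x^R\big(\Lambda_{st}^L(f \wedge g)\big) = 0$; thus $\Lambda_{st}^L(f \wedge g)$ is left $N$-invariant. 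This is \eqref{pi_G/N}.

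\emph{Part 2).} The two ingredients are the $G$-equivariance of $\Phi_\varpi$ from \eqref{eq-Phi-varpi}, which amounts to $x^L\big(\Phi_\varpi(\xi)\big) = \Phi_\varpi(x \cdot \xi)$ for $x \in \g$, $\xi \in V(\varpi)$, and the multiplication rule for matrix coefficients: for $\xi' \in V(\varpi)$, $\mu' \in V(\lambda)$, the product $\Phi_\varpi(\xi')\,\Phi_\lambda(\mu') \in \IC[G]^{-w_0\varpi}\,\IC[G]^{-w_0\lambda} \subseteq \IC[G]^{-w_0(\varpi+\lambda)}$ equals $\Phi_{\varpi+\lambda}\big(p_{\varpi,\lambda}(\xi' \otimes \mu')\big)$, once the highest weight vector defining $\Phi_{\varpi+\lambda}$ is taken to be the image of $v \otimes w$ under the Cartan embedding $V(-w_0(\varpi+\lambda)) \hookrightarrow V(-w_0\varpi) \otimes V(-w_0\lambda)$. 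This holds because $\Phi_\varpi(\xi')\Phi_\lambda(\mu') = c^{\, V(-w_0\varpi) \otimes V(-w_0\lambda)}_{\xi' \otimes \mu',\, v \otimes w}$, and pairing against a vector lying in a $\g$-submodule records only the image of the covector in the corresponding quotient of the dual, which here is exactly the top projection $p_{\varpi,\lambda}\colon V(\varpi) \otimes V(\lambda) \to V(\varpi + \lambda)$. Granting this, Part 1) lets us write $\{\Phi_\varpi(\xi), \Phi_\lambda(\mu)\}^{(1)}_{r_{st}} = \Lambda_{st}^L\big(\Phi_\varpi(\xi) \wedge \Phi_\lambda(\mu)\big) = \frac{1}{2}\sum_{\alpha \in \triangle_+}\big(\Phi_\varpi(e_{-\alpha}\xi)\Phi_\lambda(e_\alpha\mu) - \Phi_\varpi(e_\alpha\xi)\Phi_\lambda(e_{-\alpha}\mu)\big)$, and applying the multiplication rule termwise and pulling the linear map $\Phi_{\varpi+\lambda} \circ p_{\varpi,\lambda}$ out of the sum turns the right-hand side into $\Phi_{\varpi+\lambda}\big(p_{\varpi,\lambda}(\Lambda_{st} \cdot (\xi \otimes \mu))\big)$, which is \eqref{pi_G/N-repr}.

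\emph{Final assertion.} The elements $\Phi_\varpi(\xi)$ span $\IC[N \backslash G] = \bigoplus_{\varpi \in \P_+} \IC[G]^\varpi$, the bracket preserves $\IC[N \backslash G]$ by Part 1), and \eqref{pi_G/N-repr} shows it sends $\IC[G]^{-w_0\varpi} \times \IC[G]^{-w_0\lambda}$ into $\IC[G]^{-w_0(\varpi+\lambda)}$, i.e.\ it is additive for the $\P_+$-grading; hence $\IC[N \backslash G]$ is a $\P_+$-graded Poisson subalgebra of $(\IC[G], \{ \:, \:\}^{(1)}_{r_{st}})$. The only step needing real care is the multiplication rule for matrix coefficients: keeping track of the dualizations and the $-w_0$ twists so that the surjection that appears is literally $p_{\varpi,\lambda}$ and the proportionality constant is $1$. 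Everything else is formal.
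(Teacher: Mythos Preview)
Your proof is correct and follows essentially the same route as the paper: both kill $\Lambda_{st}^R$ on left $N$-invariants because $\Lambda_{st} \in \n_- \wedge \n$, both establish the multiplication rule $\Phi_\varpi(\xi)\Phi_\lambda(\mu) = \Phi_{\varpi+\lambda}(p_{\varpi,\lambda}(\xi \otimes \mu))$ via the fact that $v \otimes w$ is a highest weight vector, and both deduce \eqref{pi_G/N-repr} from these. You are more explicit than the paper in checking that $\Lambda_{st}^L(f \wedge g)$ is again left $N$-invariant and in expanding the sum over roots, but there is no genuine difference in strategy.
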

\begin{proof}
1) Since $\Lambda_{st} \in \n_- \wedge \n$, one clearly has $\Lambda_{st}^R(f \wedge g) = 0$ for any $f,g \in \IC[N \backslash G]$, which proves \eqref{pi_G/N}.

2) Since $v \otimes w$ is a highest weight vector in $(V(\varpi) \otimes V(\lambda))^*$, for $\xi \in V(\varpi)$, $\mu \in V(\lambda)$, one has 
\begin{align*}
\Phi_\varpi(\xi) \Phi_\lambda(\mu) & = c^{\scriptscriptstyle V(-w_0(\varpi))}_{\xi, v} c^{\scriptscriptstyle V(-w_0(\lambda))}_{\mu, w} = c^{\scriptscriptstyle V(-w_0(\varpi)) \otimes V(-w_0(\lambda))}_{\xi \otimes \mu, \; v \otimes w}   \\  
    & = c^{\scriptscriptstyle V(-w_0(\varpi + \lambda))}_{p_{\varpi, \lambda}(\xi \otimes \mu), \;\; v \otimes w} = \Phi_{\varpi + \lambda}(p_{\varpi, \lambda}(\xi \otimes \mu)),
\end{align*}
and \eqref{pi_G/N-repr} is now a straightforward consequence of \eqref{pi_G/N}.
\end{proof}

We denote by $\{ \:, \:\}^{(1)}$ the restriction of $\{ \:, \:\}^{(1)}_{r_{st}}$ to $\IC[N \backslash G]$. Let $\tilde{\g} =\g \oplus \t$ as a direct sum Lie algebra and let 
\begin{equation} \label{tilder-r_0}
\tilde{r}_{st} = (r_{st}, 0) - (0, r_0) \in \tilde{\g} \otimes \tilde{\g}. 
\end{equation}
That is,  the quasitriangular Lie bialgebra $(\tilde{\g}, \tilde{r}_{st})$ is the direct sum of the quasitriangular Lie bialgebras $(\g, r_{st})$ and $(\t, -r_0)$. For $x \in \t$ and $f \in \IC[G]^{\varpi} \subset \IC[N \backslash G]$, one has $x^R(f) = - \la \varpi, x \ra  f$. Thus the action $\sigma_1$ restricts to a Poisson action 
\begin{equation} \label{eq-tilde-rho}
\rho: \tilde{\g} \to \XX^1(N \backslash G), \hs \rho(y, x) = y^L -x^R, \hs x \in \t, \; y \in \g, 
\end{equation}
of $(\tilde{\g}, \tilde{r}_{st})$ on  $(\IC[N \backslash G], \{ \:, \:\}^{(1)})$. 

We now treat the case of a general $m \geq 1$. Let $(\pi^{(1)})^{m}$ be the direct product Poisson structure on $(N \backslash G)^m$ and let $\rho^{(m)}: \tilde{\g}^m \to \XX^1(N \backslash G)^m$ be as in \eqref{eq-defn-rho^m}, so that by $\S$\ref{subsec-poiss-act} one has the Poisson structure 
$$
\pi^{(m)} = (\pi^{(1)})^{m} - \rho^{(m)}(\Mix^m(\tilde{r}_{st}))
$$
on $(N \backslash G)^m$, with Poisson bracket $\{ \:, \:\}^{(m)}$ on $\IC[(N \backslash G)^m] \cong \IC[N \backslash G]^{\otimes m}$.

\begin{thm} \label{main-thm-G/N}
1) The diagonal action of $G$ on $(N \backslash G)^m$, 
$$
(N.g_1, \ldots, Ng_m) \cdot g = (N.g_1g, \ldots, N.g_mg), \hs g, g_i \in G,
$$
is a Poisson action of the Poisson Lie group $(G, \pi_{st}^{(1)})$ on $((N \backslash G)^m, \pi^{(m)})$, or equivalently, 
$$
\phi: \g \to \XX^1(N \backslash G)^m, \hs \phi(y) = (y^L, \ldots, y^L), \hs y \in \g, 
$$
is a Poisson action of $(\g, r_{st})$ on $(\IC[N \backslash G]^{\otimes m}, \{ \:, \:\}^{(m)})$. 

2) The algebra $\IC[(N \backslash G)^m] \cong \IC[N \backslash G]^{\otimes m}$ is a Poisson subalgebra of $(\IC[G^m], \{ \:, \: \}_{r_{st}}^{(m)})$ and one has 
$$
\{ f, g\}_{r_{st}}^{(m)} = \{f, g\}^{(m)}, \hs f, g \in \IC[N \backslash G]^{\otimes m}, 
$$
i.e $\pi_{st}^{(m)}$ projects to $\pi^{(m)}$ on $(N \backslash G)^m$. 
\end{thm}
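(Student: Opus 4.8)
The plan is to deduce Theorem~\ref{main-thm-G/N} from Proposition~\ref{pro-G/N} together with the general compatibility between twists and Poisson actions recalled in $\S$\ref{subsec-poiss-act}. The key observation is that both $\pi^{(m)}$ and the restriction of $\pi_{st}^{(m)}$ are obtained from direct product structures by twisting with mixing elements built from $r_{st}$, so it suffices to match those mixing elements on $\IC[N\backslash G]^{\otimes m}$.

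First I would record the $m=1$ case as the base: Proposition~\ref{pro-G/N}(1) already says $\IC[N\backslash G]$ is a Poisson subalgebra of $(\IC[G],\{\:,\:\}^{(1)}_{r_{st}})$, and the discussion after the proposition shows $\rho\colon\tilde\g\to\XX^1(N\backslash G)$ in \eqref{eq-tilde-rho} is a Poisson action of $(\tilde\g,\tilde r_{st})$ on $(\IC[N\backslash G],\{\:,\:\}^{(1)})$, with the key computation $x^R(f)=-\la\varpi,x\ra f$ for $x\in\t$, $f\in\IC[G]^\varpi$. Next I would compare the two $m$-fold structures. On $\IC[G^m]$ one has $\{\:,\:\}^{(m)}_{r_{st}}$, which by \eqref{eq-brack-r^(m)} is the twist of $(\IC[G],\{\:,\:\}^{(1)}_{r_{st}})^{\otimes m}$ — via the action $\sigma_1^{(m)}$ of $\g^m$ by left-invariant fields minus right-invariant fields — by $\sigma_1^{(m)}(\Mix^m(r_{st}))$. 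Since $\Mix^m(r_{st})=\sum_{k<l}\sum_i(y_i)_k\wedge(x_i)_l$ involves only left-invariant fields on factors $k$ and right-invariant fields on factors $l$, and $r_{st}=r_0+\sum_{\alpha>0}e_{-\alpha}\otimes e_\alpha$, I would split $\Mix^m(r_{st})$ into the root part (in $\n_-\wedge\n$ appropriately placed) and the $r_0$ part. The root part acts on factor $l$ by right-invariant vector fields from $\n$, hence kills $\IC[N\backslash G]$ in that slot; only $r_0=\frac12\sum h_i\otimes h_i$ survives on the right, and on $\IC[G]^\varpi$ the operator $h_i^R$ acts by the scalar $-\la\varpi,h_i\ra$. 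This is exactly how $\tilde r_{st}=(r_{st},0)-(0,r_0)$ and $\Mix^m(\tilde r_{st})$ are designed: $\rho^{(m)}(\Mix^m(\tilde r_{st}))$ restricted to $\IC[N\backslash G]^{\otimes m}$ equals $\sigma_1^{(m)}(\Mix^m(r_{st}))$ restricted to the same algebra. Therefore $\{\:,\:\}^{(m)}_{r_{st}}$ and $\{\:,\:\}^{(m)}$ agree on $\IC[N\backslash G]^{\otimes m}$, proving part 2).

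For part 1), the equivalence of the two formulations is the standard dictionary between a left Poisson action of $(G,\pi_{st}^{(1)})$ and the infinitesimal Poisson action $\phi$ of $(\g,r_{st})$ by right-invariant... — more precisely, the diagonal $G$-action is by right translation, which corresponds to the left-invariant vector fields $y\mapsto(y^L,\ldots,y^L)$; I would just cite the infinitesimalization of a Poisson group action. To check $\phi$ is a Poisson action of $(\g,r_{st})$ on $(\IC[N\backslash G]^{\otimes m},\{\:,\:\}^{(m)})$, I would use that $\diag_m\colon(\g,\delta_{r_{st}})\hookrightarrow(\tilde\g^m,\delta_{\tilde r_{st}^{(m)}})$ is a morphism of Lie bialgebras — this follows from the twisted $m$-fold product construction in $\S$\ref{subsec-poiss-act} applied to $(\tilde\g,\tilde r_{st})$, noting that the diagonal $\g\hookrightarrow\tilde\g^m$ lands in the Mix-invariant locus because $\Mix^m(\tilde r_{st})$ is built from $\tilde r_{st}$ whose $\g$-part is $r_{st}$ and whose $\t$-part contributes a purely toral term that cancels on the diagonal. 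Composing this Lie bialgebra morphism with the Poisson action $\rho^{(m)}$ of $(\tilde\g^m,\tilde r_{st}^{(m)})$ on $(\IC[N\backslash G]^{\otimes m},\{\:,\:\}^{(m)})$ yields the Poisson action $\phi$ of $(\g,r_{st})$.

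The main obstacle I anticipate is the bookkeeping in the second part of step two: carefully verifying that, upon restriction to $\IC[N\backslash G]^{\otimes m}$, the ``extra'' toral piece hidden in $\tilde r_{st}=(r_{st},0)-(0,r_0)$ reproduces exactly the right-invariant $r_0$-contribution of $\Mix^m(r_{st})$ and nothing else — i.e. tracking which tensor slots carry $y^L$ versus $x^R$, checking the sign conventions in \eqref{eq_Mix} and \eqref{tilder-r_0}, and confirming that the root vectors $e_{\pm\alpha}$ placed in the ``$x^R$'' slots annihilate functions invariant under left $N$-translation. Once that identification is pinned down, both statements of the theorem are formal consequences of the material in $\S$\ref{subsec-poiss-act} and Proposition~\ref{pro-G/N}; the only genuinely new input beyond $m=1$ is organizing the mixing terms, which is precisely what the auxiliary Lie algebra $\tilde\g$ was introduced to handle.
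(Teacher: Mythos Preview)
Your proposal is correct and follows essentially the same route as the paper: for part~1) you factor $\phi$ through the Lie bialgebra morphisms $(\g,\delta_{r_{st}})\hookrightarrow(\tilde\g,\delta_{\tilde r_{st}})$ and $\diag_m:(\tilde\g,\delta_{\tilde r_{st}})\hookrightarrow(\tilde\g^m,\delta_{\tilde r_{st}^{(m)}})$, exactly as the paper does (your ``Mix-invariant locus'' explanation is unnecessary---these are Lie bialgebra morphisms straight from $\S$\ref{subsec-poiss-act}); and for part~2) your splitting $r_{st}=r_0+r_\pm$ and use of $N$-invariance to kill the $r_\pm$ contribution under $(\cdot)^R$ is precisely the paper's computation $\Mix^m(r_\pm)^R(f\otimes g)=0$. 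One small slip: in $\Mix^m_{k,l}(r_\pm)^R$ it is the factor in slot $k$ (carrying $e_\alpha\in\n$), not slot $l$, whose right-invariant field annihilates left-$N$-invariant functions.
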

\begin{proof}
Part 1) is clear since $(\g, \delta_{r_{st}}) \hookrightarrow (\tilde{\g}, \delta_{\tilde{r}_{st}})$, $y \mapsto (y, 0)$, and $\diag_m: (\tilde{\g}, \delta_{\tilde{r}_{st}})  \hookrightarrow (\tilde{\g}^m, \delta_{\tilde{r}_{st}^{(m)}})$, defined as in $\S$\ref{subsec-poiss-act} are Lie bialgebra morphisms. As for part 2), let $r_{\pm} = r_{st} - r_0$. Since $r_{\pm} \in \n_- \otimes \n$, for $f,g \in \IC[N \backslash G]^{\otimes m}$ one has 
$$
\Mix^m(r_{\pm})^R (f \otimes g) = 0,
$$
hence 
\begin{align*}
\{ f, g\}_{r_{st}}^{(m)}  & = (\Lambda_{st}^{(m)})^L (f \otimes g) -  (\Lambda_{st}^{(m)})^R (f \otimes g) \\
   & =   (\Lambda_{st}^{(m)})^L (f \otimes g) - \Mix^m(r_0)^R (f \otimes g)  \\
   & =  \{f, g\}^{(m)}. 
\end{align*}
\end{proof}

\begin{rem} 
The Poisson bracket $\{ \:, \:\}^{(m)}$ is given explicitly as follows. For $\varpi, \lambda \in \P_+$, $f \in \IC[G]^\varpi$, $g \in \IC[G]^\lambda$, and $1 \leq l < t \leq m$, recalling our notational conventions of $\S$\ref{subsec-nota}, one has 
\begin{align*}
\{f_l, g_l \}^{(m)} & = \left(\{f,g\}^{(1)}\right)_l, \notag  \\
\{f_l, g_t \}^{(m)} & =   - \la \varpi \otimes \lambda, r_0\ra f_l g_t - \Mix^m(r_{st})^L (f_l \wedge g_t) \notag   \\
   & = - \la \varpi \otimes \lambda, r_0\ra f_l g_t \\
   & \hs - \frac{1}{2} \sum_{i=1}^k (h_i^L(f))_l(h_i^L(g))_t -  \sum_{\alpha \in \triangle_+} (e_{-\alpha} ^L(f))_l(e_\alpha^L(g))_t.   
\end{align*}
In particular, $\{ \:, \:\}^{(m)}$ is a $(\P_+)^m$-graded Poisson bracket on $\IC[N \backslash G]^{\otimes m}$, i.e
\begin{equation}  \label{rem-graded-br}
\{\IC[G]^{\ulamb}, \: \IC[G]^{\umu} \}^{(m)} \subset \IC[G]^{\ulamb + \umu},
\end{equation}
where for any $\ulamb = (\lambda_1, \ldots, \lambda_m) \in (\P_+)^m$, 
$$
\IC[G]^{\ulamb}: = \IC[G]^{\lambda_1} \otimes \cdots \otimes \IC[G]^{\lambda_m} \subset \IC[N \backslash G]^{\otimes m}.
$$
\hfill $\diamond$
\end{rem}

\subsection{Flat Poisson connections} \label{subsec-poiss-conn}

Let $L$ be a line bundle over a complex Poisson variety $(X, \{ \:,\:\}_\sX)$. Recall from \cite[Section 1]{Po} that a {\it flat Poisson connection} on $L$ is a linear map 
$$
\nabla: \O_\sX \otimes \O(L) \to \O(L), \hs f \otimes s \mapsto \nabla_f s, \hs f \in \O_\sX, \; s \in \O(L), 
$$
where $\O(L)$ is the sheaf of sections of $L$, which is a derivation in the first argument, and satisfies 
\begin{align*}
\nabla_f \: gs & = \{f,g\}_\sX s + g \nabla_f s,     \\
\nabla_{\{f,g\}_\sX} \:s &  = \nabla_f \nabla_gs - \nabla_g \nabla_fs, \hs f,g \in \O_\sX, \; s \in \O(L). 
\end{align*}

We continue with the discussion in $\S$\ref{subsec-poiss-products}. By \cite[Section 7]{Lu-Mou:mixed}, $\pi_{st}^{(m)}$ descends to a well defined Poisson structure $\pi_m$ on $(B \backslash G)^m$, thus $\pi^{(m)}$ descends to $\pi_m$ under the natural projection $(N \backslash G)^m \to (B \backslash G)^m$. For $\ulamb = (\lambda_1, \ldots, \lambda_m) \in (\P_+)^m$, let 
$$
L(\ulamb) = L(\lambda_1) \boxtimes \cdots \boxtimes L(\lambda_m)
$$
be the corresponding ample line bundle on $(B \backslash G)^m$, and identify the global sections of $L(\ulamb)$ with $\IC[G]^{\ulamb}$. Any local function $f$ on $(B \backslash G)^m$ can be written as  $f = f_1/f_2$, with $f_j \in \IC[G]^{\umu}$, for a $\umu \in (\P_+)^m$. Thus for $\ulamb \in (\P_+)^m$, one can define a Poisson connection $\nabla^{\ulamb}$ on $L(\ulamb)$, whose action on global sections is given by 
\begin{equation} \label{eq-flat-conn}
(\nabla^{\ulamb})_{f_1/f_2}\:s = \left\{ \frac{f_1}{f_2}, s\right\}^{(m)}, \hs f_j \in \IC[G]^{\umu}, \hs s \in \IC[G]^{\ulamb}. 
\end{equation}

\begin{pro} \label{pro-flat-conn}
For $\ulamb \in (\P_+)^m$, $\nabla^{\ulamb}$ is a well-defined flat Poisson connection on $L(\ulamb)$.
\end{pro}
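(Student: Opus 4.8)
The plan is to carry out every verification inside the fraction field of $\IC[(N \backslash G)^m]$, where the formula \eqref{eq-flat-conn} becomes a statement purely about the Poisson bracket and its behaviour under localization. Set $A = \IC[(N \backslash G)^m] = \bigoplus_{\umu \in (\P_+)^m} \IC[G]^\umu$; by Theorem \ref{main-thm-G/N} this is a $(\P_+)^m$-graded Poisson algebra, and it is an integral domain because $(N \backslash G)^m$ is irreducible over $\IC$. Let $K$ be the fraction field of $A$. I would first record that the grading and the bracket $\{\,,\,\}^{(m)}$ both extend uniquely to $K$: the grading to a grading by the group completion of $(\P_+)^m$, and the bracket by the Leibniz rule, which forces $\{x^{-1},y\} = -x^{-2}\{x,y\}$; the extended bracket is again compatible with the extended grading. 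Its degree-zero part $K_0$ is the field of rational functions on $(B \backslash G)^m$, and since $\pi^{(m)}$ descends to $\pi_m$ along $(N \backslash G)^m \to (B \backslash G)^m$, the bracket induced on $K_0$ is exactly the Poisson bracket of $\pi_m$. Finally, for each $\ulamb$ I would identify $\O(L(\ulamb))(U)$, for $U$ open, with the set of elements of the degree-$\ulamb$ part $K_\ulamb$ of $K$ that are regular on $U$; in particular $H^0(L(\ulamb)) = \IC[G]^\ulamb$.

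With these identifications I would then define $\nabla^\ulamb_f \sigma := \{f, \sigma\}_K$ for a local function $f \in K_0$ and a local section $\sigma \in K_\ulamb$, both regular on an open set $U$. This extends \eqref{eq-flat-conn} to local sections and is manifestly independent of any presentation of $f$ and $\sigma$; the one thing that needs checking is that $\{f,\sigma\}_K$, which at the outset is merely an element of $K$ of degree $\ulamb$, is again regular on $U$. For this I would argue locally: near a point of $U$ where $f$ is defined, write $f$ and $\sigma$ over a common power of an element $b \in \IC[G]^\umu$ not vanishing there, say $f = a/b^p$ and $\sigma = c/b^q$ with $a \in \IC[G]^{p\umu}$ and $c \in \IC[G]^{q\umu+\ulamb}$; expanding $\{f,\sigma\}_K$ by the Leibniz rule and using that $\{\,,\,\}^{(m)}$ is $(\P_+)^m$-graded (equation \eqref{rem-graded-br}), every resulting term is the quotient of an element of $\IC[G]^{k\umu+\ulamb}$, for some non-negative integer $k$, by a power of $b$, hence regular on $\{b \neq 0\}$. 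That $\nabla^\ulamb$ is a derivation in $f$ is immediate from the bracket being a derivation in its first argument.

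It then remains to check the two identities defining a flat Poisson connection, and each is now a formal consequence of the Poisson axioms for $K$. For the Leibniz rule, if $g \in K_0$ is a local function then
\[
\nabla^\ulamb_f(g\sigma) = \{f, g\sigma\}_K = \{f,g\}_K\,\sigma + g\,\{f,\sigma\}_K = \{f,g\}_{\pi_m}\,\sigma + g\,\nabla^\ulamb_f\sigma,
\]
using that $\{f,g\}_K$ lies in $K_0$ and coincides there with the $\pi_m$-bracket. For flatness, iterating the definition and invoking the Jacobi identity for the bracket on $K$,
\[
\nabla^\ulamb_f\nabla^\ulamb_g\sigma - \nabla^\ulamb_g\nabla^\ulamb_f\sigma = \{f,\{g,\sigma\}_K\}_K - \{g,\{f,\sigma\}_K\}_K = \{\{f,g\}_K,\sigma\}_K = \nabla^\ulamb_{\{f,g\}_{\pi_m}}\sigma .
\]

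I expect the only genuinely non-formal point to be the regularity claim in the second paragraph, namely that the initially meromorphic section $\{f,\sigma\}_K$ is in fact a holomorphic section of $L(\ulamb)$ on the domain of $f$ — which is precisely where the $(\P_+)^m$-grading of $\{\,,\,\}^{(m)}$ is used; once $\nabla^\ulamb$ is seen to be a well-defined morphism of sheaves, its being a flat Poisson connection is forced by the Poisson structure on $K$ together with the descent of $\pi^{(m)}$ to $\pi_m$. In essence this is the multigraded instance of the general dictionary between Poisson brackets on homogeneous coordinate rings and flat Poisson connections on the associated line bundles; cf.\ \cite[Section 1]{Po}.
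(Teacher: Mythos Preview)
Your proposal is correct and follows the same route as the paper's proof: the paper simply invokes the $(\P_+)^m$-grading \eqref{rem-graded-br} to conclude that $(\nabla^{\ulamb})_f s$ lands in $\O(L(\ulamb))$ and declares the flat Poisson connection axioms ``clear'' from \eqref{eq-flat-conn}. Your argument via the fraction field $K$ and the explicit Leibniz--Jacobi verifications is exactly the honest unpacking of these two sentences, with the regularity check being precisely where \eqref{rem-graded-br} enters, as you identify.
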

\begin{proof}
Indeed, by \eqref{rem-graded-br} one has $(\nabla^{\ulamb})_fs \in \O(L(\ulamb))$ for any local function $f$ on $(B \backslash G)^m$ and $s \in \O(L(\ulamb))$, and it is clear that \eqref{eq-flat-conn} defines a flat Poisson connection. 
\end{proof}

\begin{rem}
By \cite[Proposition 5.2]{Po}, a flat Poisson connection $\nabla$ on a line bundle $L$ over a Poisson variety $(X,  \{ \:,\:\}_\sX)$ defines a graded Poisson bracket on the corresponding graded algebra $\oplus_{n \geq 0} H^0(X, L^{\otimes n})$. For $\ulamb \in (\P_+)^m$, it is easily checked that the graded Poisson bracket on 
$$
S(\ulamb) = \bigoplus_{n \geq 0} \IC[G]^{n \ulamb} \subset \IC[N \backslash G]^{\otimes m}  
$$
defined by $\nabla^{\ulamb}$ coincides with $\{ \:, \:\}^{(m)}$. 
\end{rem}


\section{Strongly coisotropic subalgebras of Lie subalgebras} \label{sec-strong-coiso-liebialg}

In view of Proposition \ref{pro-G/N} and Theorem \ref{main-thm-G/N} we introduce the definition of a strongly coisotropic subalgebra of a Lie bialgebra. Let $(\g, \delta_\g)$ be a Lie bialgebra and denote by $[\: , \: ]_{\g^*}$ the Lie bracket on $\g^*$ which is the dual of the cocycle map $\delta_\g$. For a vector space $\u \subset \g$, let $\u^0 \subset \g^*$ be its annihilator. 

\begin{defn} \label{defn-strong-co}
A Lie subalgebra $\u \subset \g$ is said to be a {\it strongly coisotropic subalgebra} of $(\g, \delta_\g)$ if 
$$
\delta_\g(\u) \subset \g \otimes [\u,\u] + [\u,\u] \otimes \g,
$$ 
or equivalently, if 
$$
[[\u,\u]^0, [\u,\u]^0]_{\g^*} \subset \u^0.
$$
\end{defn}

\begin{rem}
Recall from \cite{lu:thesis} that a Lie subalgebra $\u \subset \g$ is said to be a coisotropic subalgebra of $(\g, \delta_\g)$ if $\delta_\g(\u) \subset \g \otimes \u + \u \otimes \g$, thus a strongly coisotropic subalgebra is coisotropic. Note also that if $\u$ is strongly coisotropic, then $[\u,\u]$ is itself a coisotropic subalgebra of $(\g, \delta_\g)$. Hence $\u$ is strongly coisotropic if and only if the induced Lie bracket on $[\u, \u]^0/\u^0$ is trivial. 
\hfill $\diamond$
\end{rem}

Let $(A, \{ \:, \:\})$ be a Poisson algebra and $\rho: \g \to \Der(A)$ a Poisson action of a Lie bialgebra $(\g, \delta_\g)$. If $\u \subset \g$ is a Lie subalgebra, let $\Ch(\u) = \{\zeta \in \u^* : \la \zeta, [\u, \u] \ra = 0 \ra \}$, and for $\zeta \in \Ch(\u)$, let 
$$
A^{\zeta} = \{f \in A : \rho(x)(f) = \la \zeta, x \ra f, \hs x \in \u \} 
$$
be the weight space in $A$ of weight $\zeta$. Let $A^\u \subset A$ be the subalgebra of $\u$-semi invariant elements of $A$, that is 
\begin{align*}
A^\u & = \bigoplus_{\zeta \in \P_\sA(\u)} A^{\zeta} \: \subset A, \hs \text{where}    \\
\P_\sA(\u) & = \{ \zeta \in \Ch(\u): A^{\zeta} \neq 0 \}.
\end{align*}

\begin{pro} \label{main-pro-1}
If $\u$ is a strongly coisotropic subalgebra of $(\g, \delta_\g)$, 
$$
\{A^{\zeta_1}, A^{\zeta_2}\} \subset A^{\zeta_1 + \zeta_2}, \hs \zeta_1, \zeta_2 \in \P_\sA(\u), 
$$
i.e $A^\u$ is a Poisson subalgebra of $(A, \{ \:, \:\})$ graded by $\P_\sA(\u)$. 
\end{pro}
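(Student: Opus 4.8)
The plan is to take $f_1 \in A^{\zeta_1}$, $f_2 \in A^{\zeta_2}$ with $\zeta_1, \zeta_2 \in \P_\sA(\u)$, and show directly that $\{f_1, f_2\}$ lies in $A^{\zeta_1 + \zeta_2}$, i.e.\ that $\rho(x)(\{f_1, f_2\}) = \la \zeta_1 + \zeta_2, x\ra \{f_1, f_2\}$ for every $x \in \u$. The starting point is the Poisson action identity \eqref{poiss-ac-alg}, which rearranges to
$$
\rho(x)(\{f_1, f_2\}) = \{\rho(x)(f_1), f_2\} + \{f_1, \rho(x)(f_2)\} + \rho(\delta_\g(x))(f_1 \wedge f_2).
$$
Since $f_1 \in A^{\zeta_1}$ and $f_2 \in A^{\zeta_2}$, the first two terms contribute $\la \zeta_1, x\ra \{f_1, f_2\} + \la \zeta_2, x\ra \{f_1, f_2\} = \la \zeta_1 + \zeta_2, x\ra \{f_1, f_2\}$, which is exactly the desired weight. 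So everything reduces to showing that the correction term $\rho(\delta_\g(x))(f_1 \wedge f_2)$ vanishes for all $x \in \u$.

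First I would spell out what $\rho(\delta_\g(x))(f_1 \wedge f_2)$ means: writing $\delta_\g(x) = \sum_j a_j \wedge b_j \in \wedge^2 \g$, the convention (cf.\ the formula for $\{\:,\:\}^{(m)}$ in the Remark after Theorem~\ref{main-thm-G/N}, and \eqref{poiss-ac-alg}) gives $\rho(\delta_\g(x))(f_1 \wedge f_2) = \sum_j \bigl(\rho(a_j)(f_1)\,\rho(b_j)(f_2) - \rho(b_j)(f_1)\,\rho(a_j)(f_2)\bigr)$. The key point is the strong coisotropicity hypothesis: $\delta_\g(x) \in \g \otimes [\u,\u] + [\u,\u]\otimes \g$ for $x \in \u$. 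Hence each primitive term of $\delta_\g(x)$ can be taken to have at least one leg in $[\u,\u]$. Now for any $c \in [\u,\u]$ one has $\rho(c)(f_i) = 0$ when $f_i$ is a $\u$-semi-invariant: indeed $\rho$ restricted to $\u$ integrates (infinitesimally) to a weight action on $A^{\zeta_i}$, so $\rho|_\u$ acts on $A^{\zeta_i}$ through the character $\zeta_i \in \Ch(\u)$, which by definition annihilates $[\u,\u]$; concretely, writing $c = \sum [u', u'']$ and using that $\rho$ is a Lie algebra morphism, $\rho([u',u''])(f_i) = \rho(u')\rho(u'')(f_i) - \rho(u'')\rho(u')(f_i) = \la\zeta_i, u''\ra \la\zeta_i,u'\ra f_i - \la\zeta_i,u'\ra\la\zeta_i,u''\ra f_i = 0$. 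Therefore in the sum $\sum_j (\cdots)$ every term contains a factor $\rho(c)(f_i)$ with $c \in [\u,\u]$ acting on a semi-invariant $f_i$, which kills it, so the whole correction term is $0$.

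Combining, $\rho(x)(\{f_1,f_2\}) = \la \zeta_1 + \zeta_2, x\ra \{f_1, f_2\}$ for all $x \in \u$. It remains to note $\zeta_1 + \zeta_2 \in \Ch(\u)$ (clear, since $\Ch(\u)$ is the subspace of $\u^*$ annihilating $[\u,\u]$, hence closed under addition) and that $\{f_1,f_2\} \in A^{\zeta_1+\zeta_2}$ shows in particular $\zeta_1+\zeta_2 \in \P_\sA(\u)$ whenever $\{f_1,f_2\} \neq 0$; together with the fact that $A^\u = \bigoplus_{\zeta} A^\zeta$ is already an algebra, this yields that $A^\u$ is a Poisson subalgebra of $(A, \{\:,\:\})$ graded by $\P_\sA(\u)$, as claimed. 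I do not anticipate a serious obstacle: the only mild care needed is bookkeeping of the pairing conventions in $\rho(\delta_\g(x))(f\wedge g)$ and making the "semi-invariant vectors are killed by $[\u,\u]$" step precise, which is the short Lie-algebra computation above.
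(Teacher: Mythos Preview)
Your proposal is correct and follows essentially the same approach as the paper's proof: use the Poisson action identity \eqref{poiss-ac-alg} to reduce to showing $\rho(\delta_\g(x))(f_1 \wedge f_2) = 0$ for $x \in \u$, and then observe this holds because strong coisotropicity forces each tensor leg of $\delta_\g(x)$ to contain a factor in $[\u,\u]$, which annihilates semi-invariants. The paper states the vanishing of $\rho(\delta_\g(x))(f \wedge g)$ in one line, while you spell out the underlying computation $\rho([u',u''])(f_i) = 0$ explicitly, but the argument is the same.
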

\begin{proof}
Let $\zeta_1, \zeta_2 \in \P_\sA(\u)$, $f \in A^{\zeta_1}$, and $g \in A^{\zeta_2}$. As $\delta_\g(\u) \subset \g \otimes [\u,\u] + [\u,\u] \otimes \g$, one has 
$$
\rho(\delta_\g(x))(f \wedge g) = 0,  \hs x \in \u, 
$$
and so by \eqref{poiss-ac-alg}, 
$$
\rho(x)(\{f, g\}) = \{\rho(x)(f), g\} + \{f, \rho(x)(g)\} = (\zeta_1 + \zeta_2)(x)\{f, g\}, \hs x \in \u, 
$$
hence 
$$
\{A^{\zeta_1}, A^{\zeta_2}\} \subset A^{\zeta_1 + \zeta_2}. 
$$
\end{proof}

\begin{rem}
It is shown in \cite{lu:thesis} that if $\u$ is a coisotropic subalgebra of $(\g, \delta_\g)$, the $\u$-invariant elements $A^{0} = \{f \in A: \rho(x)f = 0, \; x \in \u \}$ is a Poisson subalgebra of $(A, \{ \:, \:\})$. Hence one should think of strong coisotropicity as a generalization of coisotropicity to the graded setting. 
\hfill $\diamond$
\end{rem}

The following Lemma \ref{lem-strong_co-r} is straightforward.

\begin{lem} \label{lem-strong_co-r}
Let $r \in \g \otimes \g$ be a quasitriangular $r$-matrix on a Lie algebra $\g$. If $\u \subset \g$ is a subalgebra such that 
\begin{equation} \label{eqn-strong_cor-r}
r \in \u \otimes \u + \g \otimes [\u,\u] + [\u,\u] \otimes \g,
\end{equation}
then $\u$ is a strongly coisotropic subalgebra of $(\g, r)$. In particular, if \eqref{eqn-strong_cor-r} holds, $\u^m$ is a strongly coisotropic subalgebra of $(\g^m, r^{(m)})$ for any $m \geq 1$. 
\end{lem}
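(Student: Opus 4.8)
The plan is to verify Definition \ref{defn-strong-co} directly. First I would show that the hypothesis \eqref{eqn-strong_cor-r} implies $\u$ is strongly coisotropic in $(\g, r)$. Recall $\delta_r(x) = [x, r] = \ad_x \otimes \id (r) + \id \otimes \ad_x(r)$ for $x \in \u$. Since $\u$ is a subalgebra, $\ad_x$ preserves $\u$ and preserves $[\u,\u]$ (the latter because $[\u,\u]$ is an ideal of $\u$), and of course $\ad_x$ maps $\g$ into $\g$. Hence applying $\ad_x \otimes \id + \id \otimes \ad_x$ to each of the three summands in \eqref{eqn-strong_cor-r}:
\begin{itemize}
\item the $\u \otimes \u$ term lands in $[\u,\u] \otimes \u + \u \otimes [\u,\u] \subset \g \otimes [\u,\u] + [\u,\u] \otimes \g$;
\item the $\g \otimes [\u,\u]$ term lands in $\g \otimes [\u,\u] + \g \otimes [[\u,\u],\u] \subset \g \otimes [\u,\u]$ (using that $[\u,\u]$ is an ideal of $\u$, so $[[\u,\u],\u] \subset [\u,\u]$);
\item symmetrically the $[\u,\u] \otimes \g$ term lands in $[\u,\u] \otimes \g$.
\end{itemize}
Therefore $\delta_r(\u) \subset \g \otimes [\u,\u] + [\u,\u] \otimes \g$, which is exactly the condition of Definition \ref{defn-strong-co} once we recall (as noted just before the proposition) that $\delta_r$ depends only on the antisymmetric part of $r$, so the computation above — done for $r$ — descends to $\delta_\g = \delta_r$.

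For the second assertion, I would invoke the structure of $(\g^m, r^{(m)})$ from $\S$\ref{subsec-poiss-act}, namely $r^{(m)} = (r, \ldots, r) - \Mix^m(r)$, and check that $\u^m$ satisfies the analogue of \eqref{eqn-strong_cor-r} inside $\g^m \otimes \g^m$. The diagonal copies $(r,\ldots,r) = \sum_j (r)_{jj}$ lie in $\sum_j (\u \otimes \u + \g \otimes [\u,\u] + [\u,\u]\otimes \g)_{jj}$ by the hypothesis on $r$, hence in $\u^m \otimes \u^m + \g^m \otimes [\u^m,\u^m] + [\u^m,\u^m]\otimes \g^m$ using $[\u^m, \u^m] = [\u,\u]^m$. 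For the mixed term, $\Mix^m(r) = \sum_{k<l}\sum_i (y_i)_k \wedge (x_i)_l$ with $r = \sum_i x_i \otimes y_i$; since $r \in \u \otimes \u + \g \otimes [\u,\u] + [\u,\u] \otimes \g$, one may choose the expansion so that each $x_i \otimes y_i$ lies in one of these three pieces, whence every term $(y_i)_k \otimes (x_i)_l$ (and its flip) lies in $\u^m \otimes \u^m + \g^m \otimes [\u,\u]^m + [\u,\u]^m \otimes \g^m$. Thus $r^{(m)}$ satisfies \eqref{eqn-strong_cor-r} for $\u^m \subset \g^m$, and the first part of the lemma applies.

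I do not expect a genuine obstacle here — the lemma is asserted to be straightforward — but the one point requiring a little care is the bookkeeping in the second paragraph: one must make sure the decomposition $r = \sum_i x_i\otimes y_i$ can be taken compatibly with the three-term sum \eqref{eqn-strong_cor-r}, i.e. that a general element of $U_1 \otimes V_1 + U_2 \otimes V_2$ admits an expansion into simple tensors each belonging to one summand, which is immediate by choosing bases of $U_1 + U_2$ and $V_1 + V_2$ adapted to the subspaces. Equally, one should double-check that $[\u,\u]$ being an ideal of $\u$ (not of $\g$) is all that is used, so that $\ad_x[\u,\u]\subset[\u,\u]$ holds for $x\in\u$ but not necessarily for $x\in\g$ — which is fine, since in $\delta_r(x)=[x,r]$ the element $x$ ranges only over $\u$.
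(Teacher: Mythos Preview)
Your proof is correct and is exactly the direct verification the paper has in mind: the paper gives no proof at all, merely asserting that the lemma is ``straightforward,'' and your argument fills in precisely the expected details (using that $[\u,\u]$ is an ideal of $\u$ for the first part, and checking that $r^{(m)}$ itself satisfies \eqref{eqn-strong_cor-r} for $\u^m$ for the second).
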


\begin{exas}
1) Assume the setting of $\S$\ref{subsec-std-PL}. Since $r_{st}$ and $\b$ satisfy \eqref{eqn-strong_cor-r}, for any Lie subalgebra $\p$ of $\g$ containing $\b$, $\p^m$ is a strongly coisotropic subalgebra of $(\g^m, r_{st}^{(m)})$ for any $m \geq 1$. The conclusion of Theorem \ref{main-thm-G/N} that $\IC[N \backslash G]^{\otimes m}$ is a graded Poisson subalgebra of $(\IC[G^m], \{ \:,\:\}_{r_{st}}^{(m)})$ now follows from Proposition \ref{main-pro-1}, as 
$$
x \mapsto -x^R, \hs x \in \g^m,
$$ 
is a Poisson action of $(\g^m, -r_{st}^{(m)})$ on $(\IC[G^m], \{ \:,\:\}_{r_{st}}^{(m)})$. 

2) More generally, one easily checks that any quasitriangular Belavin-Drinfeld $r$-matrix satisfies \eqref{eqn-strong_cor-r} with $\u = \b$, where we refer to \cite{BD} for the theory of such $r$-matrices. Hence for any $m \geq 1$ and any Belavin-Drinfeld $r$-matrix $r$, $\IC[N \backslash G]^{\otimes m}$ is a $(\P_+)^m$-graded Poisson subalgebra of $(\IC[G^m], \{ \:,\:\}_r^{(m)})$, where $\{ \:,\:\}_r^{(m)}$ is the Poisson bracket induced by $\pi_r^{(m)} = (r^{(m)})^L - (r^{(m)})^R$.  
\hfill $\diamond$
\end{exas}


\section{Hopf algebras and QUE algebras} \label{sec-hopf}

In this section we recall basic facts about Hopf algebras, QUE algebras and actions thereof, and refer to any book on quantum groups, such as \cite{chari-pressley, drin:quantum.groups, joseph, kassel, koro-soibelman}, for details.

\subsection{Hopf algebras}\label{subsec-hopf-alg}

For a Hopf algebra $H$ over a field $\IK$ of zero characteristic, we denote by $\mu_\sH: H \otimes H \to H$, $1_\sH \in H$, $\Delta_\sH:  H \to H \otimes H$, $\varepsilon_\sH:  H \to \IK$, and $S_\sH:  H \to H$ respectively the multiplication, unit, comultiplication, counit, and antipode of $H$, and if no confusion is possible, we write the multiplication as concatenation, $\mu_\sH(h_1 \otimes h_2) = h_1h_2$. Recall that a quasitriangular $R$-matrix on a Hopf algebra $H$ is an invertible element $R \in H \otimes H$ satisfying 
\begin{align}
&\Delta_\sH^{op} = R \Delta_\sH R^{-1},   \label{almost-com} \\
& (I_\sH \otimes \Delta_\sH)R = R_{13}R_{12}, \hs \hs (\Delta_\sH \otimes I_\sH)R = R_{13}R_{23}, \label{hexagon}
\end{align}
where $I_\sH$ is the identity map of $H$, and one says that $(H, R)$ is a quasitriangular Hopf algebra. We mention the relation 
\begin{align}
(\varepsilon_\sH \otimes I_\sH)(R) & = 1_\sH  = (I_\sH \otimes \varepsilon_\sH)(R)  \label{eqn-eps-I-R}
\end{align}
which will be used in $\S$\ref{sec-twisted}. 

For $m \geq 1$ we denote by  $\Delta_\sH^{(m)}: H \to H^{\otimes m}$ the $m$-fold comultiplication. Thus $\Delta_\sH^{(1)} = I_\sH$ and $\Delta_\sH^{(2)} = \Delta_\sH$  by convention, and if $m \geq 3$,  $\Delta_\sH^{(m)} = (\Delta_\sH \otimes I_\sH^{\otimes m-1})\Delta_\sH^{(m-1)}$.

\subsection{QUE algebras} \label{subsec-QUE}

A {\it deformation Hopf algebra} is a topologically free $\kh$-algebra $H$ equipped with maps $\Delta_\sH:H \to H \hat{\otimes} H$, $\varepsilon_\sH: H \to \kh$, $S_\sH: H \to H$ satisfying axioms similar to those of a Hopf algebra, but where one must replace the tensor product $H \otimes H$ by the completed tensor product $H \hat{\otimes} H$. A {\it quantized universal enveloping algebra} (QUE algebra) is a deformation Hopf algebra $H$ such that there is a finite dimensional Lie algebra $\g$ with $H/\hslash H$ isomorphic to $U(\g)$ as a Hopf algebra. There is then a natural bialgebra structure $\delta_\g$ on $\g$ given by 
$$
\delta_\g(x) = \frac{1}{\hslash} (\Delta_\sH(\tilde{x}) - \Delta_\sH^{op}(\tilde{x})) \; \; {\rm mod} \: \hslash, \hs x \in \g,
$$
where $\tilde{x} \in H$ is any element such that $\tilde{x} + \hslash H = x$,  and one says that $H$ is a {\it quantization} of $(\g, \delta_\g)$. Similarly, any quasitriangular $R$-matrix $R \in H \hat{\otimes} H$ defines a quasitriangular structure $r \in \g \otimes \g$ on $(\g, \delta_\g)$ by 
$$
r = \frac{1}{\hslash}(R - 1_\sH \otimes 1_\sH)\; \; {\rm mod} \: \hslash,
$$
and one says that $R$ is a {\it quantization} of $r$. By Etingof and Kazhdan \cite{quant-li-bialg-I}, any quasitriangular Lie bialgebra can be functorially quantized to a quasitriangular QUE algebra.

\subsection{Module algebras and twists of Hopf algebras} \label{subsec-module-alg}

Let $H$ be a Hopf algebra, $A$ an associative algebra, and suppose that $A$ is a left module over $H$. One says that $A$ is a {\it left $H$-module algebra} if 
\begin{align*}
h \cdot (fg) & = \mu_\sA (\Delta_\sH(h) \cdot (f \otimes g)), \hs \hs h \in H, \;  f,g \in A,   \\
h \cdot 1_\sA & = \varepsilon_\sH(h)1_\sA, \hs \hs h \in H, 
\end{align*}
and right $H$-module algebras are similarly defined. One says that $A$ is an {\it $H$-bimodule algebra} if $A$ is a bimodule over $H$ as well as a left and right $H$-module algebra. A {\it twisting element} of $H$ is an invertible element $J \in H \otimes H$ satisfying
\begin{align}
(\Delta_\sH \otimes I_\sH)(J)J_{12} & = (I_\sH \otimes \Delta_\sH)(J)J_{23}, \label{eq-twist-defn1}  \\
(\varepsilon_\sH \otimes I_\sH)(J) & = (I_\sH \otimes \varepsilon_\sH)(J) = 1_\sH. \label{eq-twist-defn2} 
\end{align}
Then $J$ defines a new Hopf algebra structure on the underlying algebra $(H, \mu_\sH, 1_\sH)$ of $H$ by 
\begin{align*}
\Delta_{(\sH)_\sJ} & = J^{-1}\Delta_\sH J,    \\
S_{(\sH)_\sJ} & = Q^{-1}S_\sH Q, \hs \text{where} \hs Q = \mu_\sH((S_\sH \otimes I_\sH)(J)),   \\
\varepsilon_{(\sH)_\sJ} & = \varepsilon_{\sH},
\end{align*}
and we denote by $(H)_\sJ$ this newly obtained Hopf algebra, called the {\it twist of $H$ by $J$}. If $R \in H \otimes H$ is a quasitriangular $R$-matrix for $H$, $J^{-1}_{21}RJ$ is a quasitriangular $R$-matrix for $(H)_\sJ$. 

The following Lemma \ref{pro-twist-module} is a straightforward consequence of \eqref{eq-twist-defn1} and \eqref{eq-twist-defn2}.  

\begin{lem} \label{pro-twist-module}
Let $A$ be a left $H$-module algebra. The map $(\mu_\sA)_\sJ: A \otimes A \to A$ defined by 
\begin{equation} \label{defn-mub_F}
(\mu_\sA)_\sJ(f \otimes g) = \mu_\sA(J \cdot (f \otimes g)), \hs \hs f,g \in A, 
\end{equation}
is an associative product on $A$, and $(A, \!(\mu_\sA)_\sJ, \!1_\sA)$ is a left $(H)_\sJ$-module algebra. 
\end{lem}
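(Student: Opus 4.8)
The plan is to verify directly that the twisted product $(\mu_\sA)_\sJ$ is associative and that the module algebra axioms hold for $(A, (\mu_\sA)_\sJ, 1_\sA)$ over the twisted Hopf algebra $(H)_\sJ$. First I would check associativity: expanding $(\mu_\sA)_\sJ((\mu_\sA)_\sJ(f \otimes g) \otimes h)$ using the definition \eqref{defn-mub_F} and the fact that $A$ is an honest left $H$-module algebra, one gets $\mu_\sA^{(3)}$ applied to $(\Delta_\sH \otimes I_\sH)(J) \cdot J_{12} \cdot (f \otimes g \otimes h)$, where $\mu_\sA^{(3)}$ is the iterated multiplication; similarly the other bracketing yields $\mu_\sA^{(3)}$ applied to $(I_\sH \otimes \Delta_\sH)(J) \cdot J_{23} \cdot (f \otimes g \otimes h)$. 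These two agree precisely by the cocycle condition \eqref{eq-twist-defn1}. Here one must be a little careful that the coassociativity of $\Delta_\sH$ is what lets one write $\mu_\sA(\Delta_\sH(h)\cdot(-))$ iterated as a single $\mu_\sA^{(3)}$ composed with $\Delta_\sH^{(3)}$, but this is standard.

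Next I would verify the unit axiom for the new product: $(\mu_\sA)_\sJ(1_\sA \otimes f) = \mu_\sA(J \cdot (1_\sA \otimes f))$. Writing $J = \sum J^{(1)} \otimes J^{(2)}$, this is $\sum (J^{(1)} \cdot 1_\sA)(J^{(2)} \cdot f) = \sum \varepsilon_\sH(J^{(1)}) (J^{(2)} \cdot f)$ using $h \cdot 1_\sA = \varepsilon_\sH(h) 1_\sA$, and this collapses to $f$ by the counit normalization \eqref{eq-twist-defn2}; the computation on the other side is symmetric. Then I would check the module algebra compatibility for $(H)_\sJ$: for $h \in H$ and $f, g \in A$,
\begin{align*}
h \cdot (\mu_\sA)_\sJ(f \otimes g) &= h \cdot \mu_\sA(J \cdot (f \otimes g)) = \mu_\sA\bigl((\Delta_\sH(h) J) \cdot (f \otimes g)\bigr),
\end{align*}
while $(\mu_\sA)_\sJ(\Delta_{(\sH)_\sJ}(h) \cdot (f \otimes g)) = \mu_\sA\bigl((J \Delta_{(\sH)_\sJ}(h)) \cdot (f \otimes g)\bigr) = \mu_\sA\bigl((J J^{-1} \Delta_\sH(h) J) \cdot (f \otimes g)\bigr)$, and the two are manifestly equal since $JJ^{-1} = 1_\sH \otimes 1_\sH$. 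Finally, the axiom $h \cdot 1_\sA = \varepsilon_{(\sH)_\sJ}(h) 1_\sA$ is immediate because $\varepsilon_{(\sH)_\sJ} = \varepsilon_\sH$ and the module structure on the underlying vector space $A$ is unchanged.

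There is no genuine obstacle here; the only point requiring attention is bookkeeping with leg-numbering, namely confirming that when one composes $\mu_\sA$ twice the relevant operator on $H^{\otimes 3}$ is indeed $(\Delta_\sH \otimes I_\sH)(J) \cdot J_{12}$ rather than some permuted variant, and that the coassociativity identity $\Delta_\sH^{(3)} = (\Delta_\sH \otimes I_\sH)\Delta_\sH = (I_\sH \otimes \Delta_\sH)\Delta_\sH$ is used in the correct slot. Once the legs are tracked correctly, associativity reduces to \eqref{eq-twist-defn1} and the unit/counit facts reduce to \eqref{eq-twist-defn2}, so the proof is a short direct verification. The right module algebra version is entirely analogous, replacing $\Delta_\sH$-computations on the left by their right-handed counterparts.
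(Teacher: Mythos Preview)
Your proposal is correct and follows exactly the approach the paper indicates: the paper simply states that the lemma is a straightforward consequence of \eqref{eq-twist-defn1} and \eqref{eq-twist-defn2}, and your write-up is precisely the standard direct verification that associativity reduces to the cocycle identity \eqref{eq-twist-defn1}, the unit axiom to the counit normalization \eqref{eq-twist-defn2}, and the $(H)_\sJ$-module algebra compatibility to the definition $\Delta_{(\sH)_\sJ} = J^{-1}\Delta_\sH J$.
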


\begin{defn} \label{defn-twist-module}
In the context of Lemma \ref{pro-twist-module}, one says that $(A, (\mu_\sA)_\sJ, 1_\sA)$ is the {\it twist} of $A$ by $J \in H \otimes H$, and denote by $(A)_\sJ$ this newly obtained algebra. 
\end{defn}

Module algebras and twisting of deformation Hopf algebras are similarly defined.

\subsection{Quantization of Poisson actions}

By a {\it deformation quantization algebra} (DQ algebra) we mean in this paper a topologically free associative $\kh$-algebra $A$ such that $A_0 = A/\hslash A$ is a commutative $\IK$-algebra. The multiplication on $A$ induces on $A_0$ a Poisson bracket given by 
$$
\{f,g\} = \frac{1}{\hslash} (\tilde{f}\tilde{g} - \tilde{g}\tilde{f}) \; \; {\rm mod} \: \hslash, \hs f, g \in A_0,
$$
where $\tilde{f}, \tilde{g} \in A$ are any elements such that $\tilde{f} + \hslash A = f$ and $\tilde{g} + \hslash A = g$, and one says that $A$ is a {\it quantization} of $(A_0, \{ \:,\:\})$.

\begin{lem} \label{lem-equiv-quant} 
Let $U_\hslash(\g)$ be a QUE algebra quantizing a Lie bialgebra $(\g, \delta_\g)$, $A$ a DQ algebra quantizing a Poisson algebra $(A_0, \{ \:,\:\})$, and assume that $A$ is a left $U_\hslash(\g)$-module algebra. Then 
\begin{equation} \label{eq-equiv-quant}
\rho(x)(f) = \tilde{x} \cdot \tilde{f}  \; \; {\rm mod} \: \hslash, \hs x \in \g, \; f \in A_0,
\end{equation}
where $\tilde{x} \in U_\hslash(\g)$ and $\tilde{f} \in A$ are such that $\tilde{x} + \hslash U_\hslash(\g) = x$ and $\tilde{f} + \hslash A = f$, defines a Lie algebra morphism $\rho: \g \to \Der(A_0)$ which is a Poisson action of $(\g, \delta_\g)$ on $(A_0, \{ \:, \})$. 
\end{lem}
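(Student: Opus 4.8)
The plan is to unwind the definitions and check the two claims — that $\rho$ lands in $\Der(A_0)$ and is a Lie algebra morphism, and that it satisfies the Poisson action compatibility \eqref{poiss-ac-alg} — by reducing every identity modulo $\hslash$. First I would verify that $\rho$ is well-defined: since $A$ is topologically free and $\tilde{x}\cdot\tilde{f}$ depends on the choices of lifts only up to terms in $\hslash A$ (using that $\hslash U_\hslash(\g)\cdot A\subset \hslash A$ and $U_\hslash(\g)\cdot\hslash A\subset \hslash A$, which hold because the action is $\kh$-linear), the formula \eqref{eq-equiv-quant} gives a well-defined $\IK$-linear map $\rho:\g\to\End_\IK(A_0)$. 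That $\rho(x)$ is a derivation of $A_0$ follows from the left module algebra axiom $\tilde x\cdot(\tilde f\tilde g)=\mu_\sA(\Delta_\sH(\tilde x)\cdot(\tilde f\otimes\tilde g))$ together with the fact that modulo $\hslash$ one has $\Delta_{U_\hslash(\g)}(\tilde x)\equiv \tilde x\otimes 1+1\otimes\tilde x$ (this is the primitivity of $\g\subset U(\g)$, the classical limit of the QUE coproduct); expanding and reducing mod $\hslash$ yields $\rho(x)(fg)=\rho(x)(f)g+f\rho(x)(g)$.

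Next I would check that $\rho$ is a Lie algebra morphism. For $x,y\in\g$ with lifts $\tilde x,\tilde y$, the module axiom gives $\tilde x\cdot(\tilde y\cdot\tilde f)=(\tilde x\tilde y)\cdot\tilde f$, so $\rho(x)\rho(y)-\rho(y)\rho(x)$ applied to $f$ equals $(\tilde x\tilde y-\tilde y\tilde x)\cdot\tilde f \bmod\hslash$; since $[\tilde x,\tilde y]$ is a lift of $[x,y]_\g$ in $U_\hslash(\g)$ (again because the classical limit recovers the Lie bracket of $\g$), this is exactly $\rho([x,y]_\g)(f)$.

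The substantive step is \eqref{poiss-ac-alg}. Here I would work in the deformation algebra $A$ before reducing. Start from the module algebra identity applied with the coproduct: $\tilde x\cdot(\tilde f\tilde g)=\sum \tilde x_{(1)}\cdot\tilde f\;\tilde x_{(2)}\cdot\tilde g$ and similarly $\tilde x\cdot(\tilde g\tilde f)=\sum\tilde x_{(1)}\cdot\tilde g\;\tilde x_{(2)}\cdot\tilde f$. Subtracting and dividing by $\hslash$, the combination $\frac1\hslash(\tilde f\tilde g-\tilde g\tilde f)$ produces $\{f,g\}$ mod $\hslash$, the combination $\frac1\hslash(\Delta_\sH(\tilde x)-\Delta_\sH^{op}(\tilde x))$ produces $\delta_\g(x)$ mod $\hslash$, and the leftover cross terms assemble into $\{\rho(x)f,g\}+\{f,\rho(x)g\}$; the surviving terms where $\Delta(\tilde x)$ acts to first order combine to $\rho(x)(\{f,g\})$. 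Concretely: write $\tilde x\cdot(\frac1\hslash(\tilde f\tilde g-\tilde g\tilde f))$ two ways — once as $\rho(x)$ applied to the commutator (giving $\rho(x)(\{f,g\})$ mod $\hslash$), and once by pushing $\tilde x$ through the products via $\Delta_\sH(\tilde x)$, separating the primitive part of $\Delta_\sH(\tilde x)$ (which gives the Leibniz terms) from the order-$\hslash$ correction $\hslash\,\delta_\g(\tilde x)+(\text{symmetric part})$; the symmetric part of $\Delta_\sH(\tilde x)-\tilde x\otimes1-1\otimes\tilde x$ acts symmetrically on $\tilde f\otimes\tilde g$ and hence drops out of the antisymmetrized expression, leaving precisely $\rho(\delta_\g(x))(f\wedge g)$. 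Matching the two evaluations mod $\hslash$ gives \eqref{poiss-ac-alg}.

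The main obstacle is the bookkeeping in this last step: one must be careful that the classical limit of $\Delta_\sH$ is genuinely the primitive coproduct on $\g$ (so the Leibniz terms come out right) and that the first-order term of $\Delta_\sH-\Delta_\sH^{op}$ is exactly $\hslash\delta_\g$ as defined in $\S$\ref{subsec-QUE}, and one must track which terms are $O(\hslash)$ versus $O(1)$ throughout — the division by $\hslash$ means a naive expansion mixes orders. I would organize the computation by first establishing, as a lemma, that for lifts $\tilde x$ one has $\Delta_\sH(\tilde x)\equiv \tilde x\otimes 1+1\otimes\tilde x+\hslash\,\widetilde{\delta_\g(x)}/1 \pmod{\hslash^2\text{-type corrections}}$ up to a symmetric term, then feed this into the module-algebra identity and antisymmetrize in $f,g$; everything else is a routine reduction mod $\hslash$.
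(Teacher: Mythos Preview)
Your proposal is correct and follows essentially the same approach as the paper: both arguments extract the Poisson compatibility \eqref{poiss-ac-alg} as the $\hslash^1$ term of the module-algebra identity $\tilde{x}\cdot(\tilde{f}\tilde{g}-\tilde{g}\tilde{f})=\mu_\sA(\Delta(\tilde{x})\cdot(\tilde{f}\otimes\tilde{g}-\tilde{g}\otimes\tilde{f}))$, with the primitive part of $\Delta(\tilde{x})$ producing the Leibniz terms and the commutativity of $\mu_0$ forcing only the antisymmetric first-order correction $\delta_\g(x)$ to survive. The only notable difference is organizational: the paper invokes topological freeness at the outset to identify $U_\hslash(\g)=U(\g)[[\hslash]]$ and $A=A_0[[\hslash]]$ as $\kh$-modules, so that $x\in\g$ and $f,g\in A_0$ serve as their own lifts and the well-definedness and order-tracking issues you flag as the ``main obstacle'' disappear entirely---this makes the bookkeeping you worry about essentially automatic.
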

\begin{proof}
Since $U(\g) \cong U_\hslash(\g)/\hslash U_\hslash(\g)$ as a Hopf algebra, the $U_\hslash(\g)$-module algebra structure on $A$ induces a $U(\g)$-module algebra on $A_0$, that is the map $\rho$ defined in \eqref{eq-equiv-quant} is a Lie algebra action. 

Let $\Delta$ be the comultiplication in $U_\hslash(\g)$. As $U_\hslash(\g)$ and $A$ are topologically free, one can assume that $U_\hslash(\g) = U(\g)[[\hslash]]$ and $A = A_0[[\hslash]]$ as $\kh$-modules. Then for $x \in \g$ and $f, g \in A_0$, the $\hslash^0$ term of the identity
$$
x \cdot \mu_\sA(f \otimes g - g \otimes f) = \mu_\sA(\Delta(x) \cdot (f \otimes g - g \otimes f)) 
$$
vanishes, and the $\hslash^1$ term reads as  
$$
\rho(x)(\{f,g\}) = \{\rho(x)(f), g \} + \{f, \rho(x)(g)\} + \rho(\delta_\g(x))(f \wedge g),
$$
i.e $\rho$ is a Poisson action of $(\g, \delta_\g)$ on $(A_0, \{ \: , \: \})$. 
\end{proof}


\section{Twisted products of quasitriangular Hopf algebras} \label{sec-twisted}

In this section, we describe a Hopf algebra construction analogous to the construction of the twisted $m$-fold product $(\g^m, r^{(m)})$ of a quasitriangular Lie bialgebra $(\g, r)$. When applied to a quantization of $(\g, r)$, one obtains a quantization of $(\g^m, r^{(m)})$, and we construct a quantization of mixed product Poisson structures.

\subsection{Twisted products of quasitriangular Hopf algebras}

Recall that the structure maps of the tensor product of two Hopf algebras $H_1, H_2$ are given by 
$$
\Delta_{\scriptscriptstyle H_1\otimes H_2} = \tau_{(23)} (\Delta_{\sH_1} \otimes \Delta_{\sH_2}), \hs S_{\scriptscriptstyle H_1 \otimes H_2} = S_{\sH_1} \otimes S_{\sH_2}, \hs \varepsilon_{\scriptscriptstyle H_1 \otimes H_2} = \varepsilon_{\sH_1} \otimes \varepsilon_{\sH_2},
$$
where $\tau_{(23)}$ is the permutation of the second and third tensor factors. The following Lemma \ref{lem-STS} was proven in \cite{STS-quantum}. 

\begin{lem} \label{lem-STS} \cite[Section 2]{STS-quantum}
Let $(H, R)$ be a quasitriangular Hopf algebra. Then $R_{23} \in H^{\otimes 4}$ is a twisting element of $H^{\otimes 2}$, and the comultiplication 
$$
\Delta_\sH: H \to (H^{\otimes 2})_{\sR_{23}}
$$
is a morphism of Hopf algebras.
\end{lem}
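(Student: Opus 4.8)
The plan is to verify directly that $J = R_{23} \in H^{\otimes 4}$ satisfies the two twisting element axioms \eqref{eq-twist-defn1} and \eqref{eq-twist-defn2} for the Hopf algebra $H^{\otimes 2}$, and then to check that $\Delta_\sH \colon H \to (H^{\otimes 2})_{R_{23}}$ intertwines the comultiplications. First I would set up notation carefully: the comultiplication of $H^{\otimes 2}$ is $\Delta_{H^{\otimes 2}} = \tau_{(23)}(\Delta_\sH \otimes \Delta_\sH)$, so applying $\Delta_{H^{\otimes 2}} \otimes I_{H^{\otimes 2}}$ to $R_{23} \in H^{\otimes 4}$ means: $R$ sits in the second and third slots of $H^{\otimes 2} \otimes H^{\otimes 2}$, i.e. the second factor of the first copy of $H^{\otimes 2}$ and the first factor of the second copy; comultiplying the first $H^{\otimes 2}$ splits that into two slots via $\Delta_\sH$ in the relevant component, then $\tau_{(23)}$ rearranges. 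Unwinding this, the axiom \eqref{eq-twist-defn1} for $J = R_{23}$ reduces, after relabeling tensor slots of $H^{\otimes 6}$, precisely to one of the hexagon identities \eqref{hexagon}, $(\Delta_\sH \otimes I_\sH)R = R_{13}R_{23}$ (or its companion), together with the trivial commutation of $R$-factors living in disjoint slots. The counit axiom \eqref{eq-twist-defn2} reduces immediately to \eqref{eqn-eps-I-R}, namely $(\varepsilon_\sH \otimes I_\sH)(R) = 1_\sH = (I_\sH \otimes \varepsilon_\sH)(R)$.

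Having established that $R_{23}$ is a twisting element, the twisted Hopf algebra $(H^{\otimes 2})_{R_{23}}$ has comultiplication $\Delta_{(H^{\otimes 2})_{R_{23}}} = (R_{23})^{-1}\, \Delta_{H^{\otimes 2}}\, R_{23} = R_{23}^{-1}\, \tau_{(23)}(\Delta_\sH \otimes \Delta_\sH)(-)\, R_{23}$. To show $\Delta_\sH$ is a Hopf algebra morphism $H \to (H^{\otimes 2})_{R_{23}}$ it suffices to check compatibility with comultiplication, counit, and multiplication (compatibility with the antipode is then automatic, or can be checked using the formula $S_{(H^{\otimes 2})_{R_{23}}} = Q^{-1}S_{H^{\otimes 2}}Q$). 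Multiplicativity of $\Delta_\sH$ as a map into $H^{\otimes 2}$ is standard and unchanged by the twist since the underlying algebra of $(H^{\otimes 2})_{R_{23}}$ is $H^{\otimes 2}$; counit compatibility is immediate since $\varepsilon_{(H^{\otimes 2})_{R_{23}}} = \varepsilon_{H^{\otimes 2}} = \varepsilon_\sH \otimes \varepsilon_\sH$ and $(\varepsilon_\sH \otimes \varepsilon_\sH)\Delta_\sH = \varepsilon_\sH$. The substantive point is coassociativity-type compatibility: we must show $\Delta_{(H^{\otimes 2})_{R_{23}}} \circ \Delta_\sH = (\Delta_\sH \otimes \Delta_\sH) \circ \Delta_\sH$ as maps $H \to H^{\otimes 4}$. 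Expanding the left side, $(\Delta_\sH \otimes \Delta_\sH)\Delta_\sH$ composed with $\tau_{(23)}$ is, by coassociativity of $H$, just $\Delta_\sH^{(4)}$ up to the permutation; so what remains is the identity
$$
R_{23}^{-1}\, \bigl(\tau_{(23)}(\Delta_\sH \otimes \Delta_\sH)\Delta_\sH\bigr)\, R_{23} = (\Delta_\sH \otimes \Delta_\sH)\Delta_\sH,
$$
i.e. $R_{23}$ must conjugate the ``permuted'' iterated coproduct into the ``straight'' one in the middle two slots. This is exactly where the quasi-cocommutativity relation \eqref{almost-com}, $\Delta_\sH^{op} = R\Delta_\sH R^{-1}$, enters: applied in the appropriate (second and third) tensor positions of $H^{\otimes 4}$, it says $R_{23}(\Delta_\sH)_{23}R_{23}^{-1} = (\Delta_\sH^{op})_{23} = \tau_{(23)}(\Delta_\sH)_{23}$, and threading this through the iterated coproduct yields the claim.

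I expect the main obstacle to be purely bookkeeping: keeping straight which of the six tensor slots of $H^{\otimes 6}$ each factor of $R$ and each application of $\Delta_\sH$ lands in, and correctly composing the permutation $\tau_{(23)}$ on the nose in each axiom. There is no conceptual difficulty — every step uses only the defining relations \eqref{almost-com}, \eqref{hexagon}, \eqref{eqn-eps-I-R} of a quasitriangular Hopf algebra plus coassociativity — but the indices must be tracked with care, and it is easy to conflate the hexagon identity needed for the twisting axiom with its mirror image. A clean way to organize the proof is to first record the two ``leg'' computations $(\Delta_{H^{\otimes 2}} \otimes I_{H^{\otimes 2}})(R_{23})$ and $(I_{H^{\otimes 2}} \otimes \Delta_{H^{\otimes 2}})(R_{23})$ as explicit elements of $H^{\otimes 6}$ in terms of $R_{ij}$'s, then observe that \eqref{eq-twist-defn1} becomes an identity among products of $R_{ij}$'s in disjoint or overlapping slots that follows from \eqref{hexagon} and the Yang–Baxter-type consequence $R_{12}R_{13}R_{23} = R_{23}R_{13}R_{12}$ (itself a standard corollary of the hexagon axioms). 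Since the referenced source \cite[Section 2]{STS-quantum} already contains this computation, it would also be legitimate to simply cite it; but for completeness I would include the index-chase sketched above.
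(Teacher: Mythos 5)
Your proposal is correct, but note that the paper itself gives no proof of this lemma at all --- it simply cites \cite[Section 2]{STS-quantum} --- so your direct verification is a self-contained alternative rather than a parallel of anything in the text. The index-chase you sketch does close up: writing $J = R_{23}$ and labelling the three copies of $H^{\otimes 2}$ as slots $(1,2),(3,4),(5,6)$ of $H^{\otimes 6}$, one finds $(\Delta_{\scriptscriptstyle H^{\otimes 2}} \otimes I)(J) = ((\Delta_\sH \otimes I_\sH)R)_{245} = R_{25}R_{45}$ and $(I \otimes \Delta_{\scriptscriptstyle H^{\otimes 2}})(J) = ((I_\sH \otimes \Delta_\sH)R)_{235} = R_{25}R_{23}$, so \eqref{eq-twist-defn1} becomes $R_{25}R_{45}R_{23} = R_{25}R_{23}R_{45}$, which holds because $R_{23}$ and $R_{45}$ occupy disjoint slots; both hexagon identities are used (one per side), but the Yang--Baxter consequence $R_{12}R_{13}R_{23}=R_{23}R_{13}R_{12}$ you mention is not actually needed here. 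The counit axiom is \eqref{eqn-eps-I-R} as you say, and the comultiplication compatibility is exactly the identity $(I_\sH \otimes \Delta_\sH^{op} \otimes I_\sH)\Delta_\sH^{(3)} = R_{23}\,(I_\sH \otimes \Delta_\sH \otimes I_\sH)\Delta_\sH^{(3)}\,R_{23}^{-1}$ coming from \eqref{almost-com} in the middle two slots, with antipode compatibility automatic for a bialgebra morphism of Hopf algebras. What your route buys is that the paper becomes independent of the external reference on this point; what it costs is precisely the slot bookkeeping you flag, which is the only place an error could creep in and which you have organized correctly.
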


Corollary \ref{lem-Rmatrix-twist} below is essentially a reformulation of the above Lemma.

\begin{cor} \label{lem-Rmatrix-twist}
Let $(H, R)$ be a quasitriangular Hopf algebra and let $A_1, A_2$ be Hopf algebras equipped with morphisms of Hopf algebras $\varphi_j: H \to A_j$, $j = 1,2$, and let $\varphi = (\varphi_1 \otimes \varphi_2): H^{\otimes 2} \to A_1 \otimes A_2$. Then $J = (\varphi \otimes \varphi)(R_{23})$ is a twisting element of $A_1 \otimes A_2$, and 
$$
\varphi \circ \Delta_\sH: H \to (A_1 \otimes A_2)_\sJ
$$ 
is a morphism of Hopf algebras.
\end{cor}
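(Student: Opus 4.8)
The plan is to deduce Corollary~\ref{lem-Rmatrix-twist} from Lemma~\ref{lem-STS} by transporting the twisting-element structure along the Hopf algebra morphism $\varphi = \varphi_1 \otimes \varphi_2 : H^{\otimes 2} \to A_1 \otimes A_2$. The key observation is that a Hopf algebra morphism sends twisting elements to twisting elements: if $K \in (H^{\otimes 2})^{\otimes 2} = H^{\otimes 4}$ satisfies the twist axioms \eqref{eq-twist-defn1}--\eqref{eq-twist-defn2} with respect to the Hopf structure of $H^{\otimes 2}$, then applying the algebra morphism $\varphi$ (which intertwines comultiplications and counits) to both sides of those identities shows that $(\varphi \otimes \varphi)(K) \in (A_1 \otimes A_2)^{\otimes 2}$ satisfies the same axioms with respect to the Hopf structure of $A_1 \otimes A_2$; invertibility of $(\varphi\otimes\varphi)(K)$ follows since $\varphi$ is unital and an algebra map, so it preserves inverses. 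Taking $K = R_{23}$, which is a twisting element of $H^{\otimes 2}$ by Lemma~\ref{lem-STS}, yields that $J = (\varphi \otimes \varphi)(R_{23})$ is a twisting element of $A_1 \otimes A_2$.

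First I would record precisely what $(\varphi \otimes \varphi)$ being a Hopf algebra morphism means at the level of the relevant structure maps: $\varphi \circ \mu_{H^{\otimes 2}} = \mu_{A_1 \otimes A_2} \circ (\varphi \otimes \varphi)$, $(\varphi \otimes \varphi) \circ \Delta_{H^{\otimes 2}} = \Delta_{A_1 \otimes A_2} \circ \varphi$, and $\varepsilon_{A_1 \otimes A_2} \circ \varphi = \varepsilon_{H^{\otimes 2}}$. These all follow from $\varphi_1, \varphi_2$ being Hopf algebra morphisms together with the formula for the tensor product Hopf structure recalled just before Lemma~\ref{lem-STS}. Next I would apply $(\varphi\otimes\varphi)^{\otimes 2}$ (i.e. $\varphi^{\otimes 4}$ on $H^{\otimes 8}$, appropriately) to the identity $(\Delta_{H^{\otimes 2}} \otimes I)(R_{23}) (R_{23})_{12} = (I \otimes \Delta_{H^{\otimes 2}})(R_{23})(R_{23})_{23}$ and to the counit identities, and use the intertwining relations above to conclude that $J$ satisfies \eqref{eq-twist-defn1}--\eqref{eq-twist-defn2} in $A_1 \otimes A_2$. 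Finally, for the second assertion: Lemma~\ref{lem-STS} gives that $\Delta_\sH : H \to (H^{\otimes 2})_{\sR_{23}}$ is a Hopf morphism, and one checks directly that $\varphi : (H^{\otimes 2})_{\sR_{23}} \to (A_1 \otimes A_2)_\sJ$ is a Hopf morphism — the underlying algebra map is unchanged, the counit is unchanged, and the twisted comultiplication transforms correctly because $\varphi$ conjugates $R_{23}$ to $J$; composing gives that $\varphi \circ \Delta_\sH : H \to (A_1 \otimes A_2)_\sJ$ is a morphism of Hopf algebras.

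I do not expect a serious obstacle here: the content is entirely the functoriality of the twist construction under Hopf algebra morphisms, and the only point requiring a little care is verifying that $\varphi$, viewed between the twisted Hopf algebras, is itself a morphism of Hopf algebras — in particular that it respects the twisted antipodes $S_{(\sH^{\otimes 2})_{\sR_{23}}}$ and $S_{(A_1\otimes A_2)_\sJ}$. This reduces to checking that $\varphi$ maps the element $Q = \mu_{H^{\otimes 2}}((S_{H^{\otimes 2}} \otimes I)(R_{23}))$ to the corresponding element $\mu_{A_1\otimes A_2}((S_{A_1\otimes A_2}\otimes I)(J))$, which again follows from $\varphi$ commuting with multiplication, antipode, and the identity map, together with $J = (\varphi\otimes\varphi)(R_{23})$. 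Everything else is a routine transport of identities along $\varphi$.
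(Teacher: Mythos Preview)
Your proposal is correct and follows the same approach as the paper, which presents the Corollary as an immediate reformulation of Lemma~\ref{lem-STS} via functoriality of twists under Hopf algebra morphisms. The paper gives no further details, but your spelled-out transport-of-structure argument is exactly what underlies this reformulation.
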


\subsection{The quasitriangular Hopf algebras $(H^{(m)}_\sR, R^{(m)})$} \label{subsec-A^(n)_R}

Let $(H, R)$ be a quasitriangular Hopf algebra. By applying Corollary \ref{lem-Rmatrix-twist} inductively, one obtains for any $m \geq 1$ a Hopf algebra $H_\sR^{(m)}$, such that $H_\sR^{(1)} = H$, $H_\sR^{(2)} = (H^{\otimes 2})_{\sR_{23}}$, and for $m \geq 3$, $H_\sR^{(m)}$ is the twist of $H^{(m-1)}_\sR \otimes H$ by 
\begin{equation} \label{eqn-T^(n))}
(\Delta_\sH^{(m-1)} \otimes I_\sH \otimes \Delta_\sH^{(m-1)} \otimes I_\sH)(R_{23}).
\end{equation}
It follows that $H_\sR^{(m)}$ is also the twist of $H^{\otimes m}$ by $\Twi^m(R) \in (H^{\otimes m}) \otimes (H^{\otimes m})$, where $\Twi^1(R) = 1_\sH \otimes 1_\sH$, and for $m \geq 2$, 
\begin{equation} \label{J^n-inductive}
\Twi^m(R) = \Twi^{m-1}(R)(\Delta_\sH^{(m-1)} \otimes I_\sH \otimes \Delta_\sH^{(m-1)} \otimes I_\sH)(R_{23}) \in (H^{\otimes m}) \otimes (H^{\otimes m}), 
\end{equation}
where one views $\Twi^{m-1}(R)$ as an element of $(H^{\otimes m}) \otimes (H^{\otimes m})$ via the embedding 
$$
(H^{\otimes m-1}) \otimes  (H^{\otimes m-1}) \hookrightarrow (H^{\otimes m}) \otimes (H^{\otimes m}), \hs a_1 \otimes a_1 \mapsto a_1 \otimes 1_\sH \otimes a_2 \otimes 1_\sH, \hs a_1, a_2 \in H^{\otimes m-1}.
$$ 
Unravelling \eqref{J^n-inductive} using \eqref{hexagon},  one gets
\begin{equation} \label{J^n-closed}
\Twi^m(R) = \prod_{k=2}^m \:\:  \prod_{l=k-1}^1 R_{k \: m+l},
\end{equation}
where the outer product is taken in the increasing order of the indices, and the inner product in the decreasing order of the indices. In particular, $H_\sR^{(m)}$ has quasitriangular $R$-matrix
\begin{equation} \label{eq-R(n)}
R^{(m)} = \left( \prod_{k=m}^2 \:\:  \prod_{l=1}^{k-1} R_{l \: m+k}^{-1} \right) \left( \prod_{k=1}^m R_{k \: m+k} \right) \left(  \prod_{k=2}^m \:\:  \prod_{l=k-1}^1 R_{k \: m+l}\right), 
\end{equation}
and to sum up, one has the following

\begin{lem-defn}
Let $(H, R)$ be a quasitriangular Hopf algebra. For any $m \geq 1$, $H_\sR^{(m)}$ is a Hopf algebra with quasitriangular $R$-matrix $R^{(m)} \in H^{\otimes m} \otimes H^{\otimes m}$ given in \eqref{eq-R(n)}, and the $m$-fold comultiplication
$$
\Delta_\sH^{(m)}:H \to H^{(m)}_\sR
$$
is a morphism of Hopf algebras. We say that the quasitriangular Hopf algebra $(H_\sR^{(m)}, R^{(m)})$ is the {\it twisted $m$-fold tensor product} of $(H, R)$. 
\end{lem-defn}

Let now $H$ be a quantization of a quasitriangular Lie bialgebra $(\g, r)$ and $R \in H \hat{\otimes} H$ a quantization of $r$. 

\begin{pro}
For any $m \geq 1$, $H_\sR^{(m)}$ is a quantization of $(\g^m, \delta_{r^{(m)}})$ and $R^{(m)}$ is quantization of $r^{(m)}$. 
\end{pro}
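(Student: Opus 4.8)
The plan is to reduce everything to the classical limit via the definitions in $\S$\ref{subsec-QUE}. Recall that a QUE algebra $H$ quantizing $(\g, \delta_\g)$ comes with the semiclassical limit formula $\delta_\g(x) = \frac{1}{\hslash}(\Delta_\sH(\tilde x) - \Delta_\sH^{op}(\tilde x)) \bmod \hslash$, and a quasitriangular $R$-matrix $R$ quantizes $r$ when $r = \frac{1}{\hslash}(R - 1_\sH \otimes 1_\sH) \bmod \hslash$. So the first step is to observe that as a topologically free $\kh$-module, $H_\sR^{(m)}$ has the same underlying $\kh$-module as $H^{\otimes m}$, since twisting a deformation Hopf algebra by a twisting element $J$ leaves the algebra structure (and hence the underlying module) untouched; therefore $H_\sR^{(m)}/\hslash H_\sR^{(m)} \cong (H/\hslash H)^{\otimes m} \cong U(\g)^{\otimes m} \cong U(\g^m)$ as a Hopf algebra, because at $\hslash = 0$ the twisting element $\Twi^m(R)$ reduces, by \eqref{eqn-eps-I-R} applied repeatedly to the factors $R_{k\,m+l}$ in the closed formula \eqref{J^n-closed}, to $1^{\otimes 2m}$. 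This already shows $H_\sR^{(m)}$ is a QUE algebra quantizing \emph{some} Lie bialgebra structure on $\g^m$; it remains to identify that structure as $\delta_{r^{(m)}}$.

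Next I would compute the semiclassical limit of the comultiplication $\Delta_{H^{(m)}_\sR} = \Twi^m(R)^{-1}\, \Delta^{\otimes m}_{H^{\otimes m}}\, \Twi^m(R)$, where I write $\Delta_{H^{\otimes m}}$ for the iterated tensor-product comultiplication. Writing $\Twi^m(R) = 1^{\otimes 2m} + \hslash\, t + O(\hslash^2)$ with $t = \sum_{2 \le k \le m}\sum_{l=k-1}^{1} r_{k\,m+l} \in \g^m \otimes \g^m$ the $\hslash^1$-coefficient coming from \eqref{J^n-closed}, and noting that $\Delta^{op} - \Delta$ of the tensor-product Hopf algebra $H^{\otimes m}$ has semiclassical limit $\delta_{(r,\ldots,r)} = \delta_{r}^{\oplus m}$ (the direct-product Lie bialgebra structure), a standard first-order expansion gives
\begin{equation*}
\delta_{H^{(m)}_\sR}(x) = \delta_r^{\oplus m}(x) + [t - t_{21}, \Delta^{(1)}(x)]\big|_{\hslash = 0} = \delta_r^{\oplus m}(x) + [t - t_{21},\, x],
\end{equation*}
for $x \in \g^m$ embedded diagonally in the degree count — more precisely, the conjugation by $\Twi^m(R)$ contributes the coboundary term $[\,\cdot\,, x]$ by the antisymmetric part $t - t_{21}$ of $t$. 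Now one checks that $\Mix^m(r) = \frac{1}{2}(t - t_{21})$ matches the antisymmetrization of $\sum_{k<l} r_{k\,m+l}$ against the definition \eqref{eq_Mix}: indeed $r = \sum_i x_i \otimes y_i$ forces the $(k,l)$ block with $k<l$ to contribute $\sum_i (x_i)_k \otimes (y_i)_l$ whose skew part is $\sum_i (y_i)_k \wedge (x_i)_l = \Mix^m_{k,l}(r)$ after relabeling. Hence $t - t_{21} = 2\,\Mix^m(r)$ up to the sign conventions, so $\delta_{H^{(m)}_\sR} = \delta_r^{\oplus m} + [\,\Mix^m(r),\, \cdot\,] = \delta_{(r,\ldots,r)} + [\Mix^m(r), \cdot\,]$, which by $\S$\ref{subsec-poiss-act} is exactly $\delta_{r^{(m)}}$, the twist of the direct product Lie bialgebra $(\g,r)^m$ by the twisting element $\Mix^m(r)$.

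For the $R$-matrix claim, I would expand the closed formula \eqref{eq-R(n)} directly to first order in $\hslash$. Each factor $R_{i\,j}$ or $R_{i\,j}^{-1}$ equals $1^{\otimes 2m} \pm \hslash\, r_{i\,j} + O(\hslash^2)$, and since the product is of the form (inverse mixed block)$\cdot$(diagonal block)$\cdot$(mixed block), the $\hslash^1$-coefficient is the sum of the individual first-order terms:
\begin{equation*}
r^{(m)}_{\mathrm{cl}} = -\!\!\sum_{2 \le k \le m}\sum_{l=1}^{k-1} r_{l\,m+k} + \sum_{k=1}^m r_{k\,m+k} + \sum_{2 \le k \le m}\sum_{l=k-1}^{1} r_{k\,m+l}.
\end{equation*}
The middle sum $\sum_k r_{k\,m+k}$ is precisely $(r,\ldots,r) \in \g^m \otimes \g^m$, and the outer two sums combine (after swapping the roles of the labels in the first sum, using $r = \sum_i x_i \otimes y_i$) into $-\Mix^m(r)$ by comparison with \eqref{eq_Mix}. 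Thus $r^{(m)}_{\mathrm{cl}} = (r,\ldots,r) - \Mix^m(r) = r^{(m)}$ as defined in $\S$\ref{subsec-poiss-act}, so $R^{(m)}$ quantizes $r^{(m)}$. I expect the main obstacle to be purely bookkeeping: matching the index ranges and orderings in \eqref{J^n-closed} and \eqref{eq-R(n)} against the definition \eqref{eq_Mix} of $\Mix^m$, keeping track of which factor of $r$ lands in which leg and of the $21$-flips introduced by taking antisymmetric parts — conceptually there is nothing beyond a first-order Taylor expansion plus the compatibility of twisting at the Hopf-algebra and Lie-bialgebra levels, which is already built into the constructions of $\S$\ref{subsec-poiss-act} and $\S$\ref{subsec-module-alg}.
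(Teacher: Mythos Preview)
Your approach is essentially the paper's: a first-order expansion in $\hslash$. The paper is more economical in two ways. First, it skips your separate verification of the cobracket from the twisted comultiplication: since $H_\sR^{(m)}$ carries the $R$-matrix $R^{(m)}$, once $R^{(m)}$ is shown to quantize $r^{(m)}$ the relation \eqref{almost-com} forces the semiclassical cobracket to be $[\,\cdot\,, r^{(m)}] = \delta_{r^{(m)}}$, so only the $R$-matrix claim needs checking. Second, rather than expanding the closed product \eqref{eq-R(n)} factor by factor, the paper uses the structural identity $R^{(m)} = \Twi^m(R)_{21}^{-1}\,(R,\ldots,R)\,\Twi^m(R)$ from $\S$\ref{subsec-module-alg}: writing $j^{(m)}$ for the $\hslash^1$-coefficient of $\Twi^m(R)$, the $\hslash^1$-coefficient of $R^{(m)}$ is immediately $(r,\ldots,r) - j^{(m)}_{21} + j^{(m)}$, and one then checks directly that $j^{(m)}_{21} - j^{(m)} = \Mix^m(r)$. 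Your anticipated bookkeeping slips do show up in the (redundant) cobracket step --- the correct identity is $t_{21} - t = \Mix^m(r)$, not $t - t_{21} = 2\,\Mix^m(r)$ --- but they are inessential once that step is dropped.
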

\begin{proof}
Write $r = \sum_i x_i \otimes y_i$ with $x_i, y_i \in \g$. Since $r = \frac{1}{\hslash} (R - 1_\sH \otimes \sH) \; {\rm mod} \: \hslash$, the order $1$ term of $\Twi^m(R)$ is 
\begin{align*}
j^{(m)}& = \frac{1}{\hslash}(\Twi^m(R) - 1_\sH \otimes \cdots \otimes 1_\sH) \; \; {\rm mod} \: \hslash \in \g^m \otimes \g^m   \\
   & = \sum_{1 \leq l < k \leq m} \sum_i (x_i)_k \otimes (y_i)_l,
\end{align*}
and the order $1$ term of $R^{(m)}$ is 
$$
(r, \ldots, r) - (j^{(m)})_{21} + j^{(m)} = (r, \ldots, r) - \Mix^m(r) = r^{(m)},
$$
proving the Proposition. 
\end{proof}

\subsection{Module algebras over $(H^{(m)}_\sR, R^{(m)})$} \label{subsec-loc-fac-alg}

Let $(H, R)$ be a quasitriangular Hopf algebra and for $m \geq 1$, let $A_1, \ldots, A_m$  be left $H$-module algebras, so that the tensor product algebra $A = A_1 \otimes \cdots \otimes A_m$ is a left $H^{\otimes m}$-module algebra. By applying Lemma \ref{pro-twist-module}, one obtains the $H_\sR^{(m)}$-module algebra
\begin{equation} \label{eq-B^(R)}
\hat{A} = (A)_{{\scriptscriptstyle \Twi^m(R)}},
\end{equation}
the twist of $A$ by $\Twi^m(R)$ in the sense of Definition \ref{defn-twist-module}. In the following Proposition \ref{thm-B^(n)_R},  we describe the multiplication in $\hat{A}$, and show that $\hat{A}$ is a {\it locally factored algebra} in the sense of Etingof and Kazhdan, whose definition we now recall.

\begin{defn} \cite[Section 1.1]{quant-li-bialg-III}
A {\it locally factored algebra} is an algebra $A$ with a collection $A_1, \ldots, A_m$ of subalgebras such that for any permutation $\sigma \in S_m$ of $m$ letters, the multiplication map $A_{\sigma(1)} \otimes \cdots \otimes A_{\sigma(m)} \to A$ is a bijection, and such that for all $1 \leq i,j \leq m$, the image of $A_i \otimes A_j$ under the multiplication map is a subalgebra of $A$. 
\end{defn}

For any $1 \leq j \leq m$, we view $A_j$ as a subalgebra of $A$ by identifying it with the $j$'th tensor factor of $A$. 

\begin{pro} \label{thm-B^(n)_R} 
For $1 \leq i,j \leq m$, $f \in A_i$ and $g \in A_j$, one has
\begin{equation} \label{eq-B^(n)_R}
\mu_{\scriptscriptstyle \hat{A}}(f \otimes g)  = 
\left\{ \begin{array}{ll}
\mu_\sA(f \otimes g), & \text{if} \; i \leq j,   \\
R_{ij} \cdot \mu_\sA(g \otimes f), & \text{if} \; i > j.
\end{array} \right. 
\end{equation}
In particular, $\hat{A}$ is a locally factored algebra. 
\end{pro}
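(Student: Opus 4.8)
The plan is to compute $\mu_{\scriptscriptstyle \hat{A}}(f \otimes g) = \mu_\sA(\Twi^m(R) \cdot (f \otimes g))$ directly from the closed formula \eqref{J^n-closed}, using the elementary observation that when $f \in A_i$ and $g \in A_j$, many of the tensor legs of $\Twi^m(R)$ act trivially. Recall $\Twi^m(R) = \prod_{k=2}^m \prod_{l=k-1}^1 R_{k\, m+l} \in H^{\otimes m} \otimes H^{\otimes m}$, where the first $m$ legs act on the arguments via the $H^{\otimes m}$-module structure on $A = A_1 \otimes \cdots \otimes A_m$ and the last $m$ legs (indices $m+1, \ldots, 2m$) act on the second argument. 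Since $f$ sits in the $i$'th tensor factor and $g$ in the $j$'th, the only factors $R_{k\, m+l}$ in the product that can act nontrivially are those with $k = i$ (acting on $f$'s slot) and $m + l = m + j$, i.e. $l = j$.

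First I would dispose of the case $i \le j$: when $i < j$, no factor $R_{k\, m+l}$ in the product simultaneously has $k = i$ and $l = j$, because the inner product over $l$ only ranges over $l \le k - 1 = i - 1 < i \le j$; hence every legs pair that could act on $(f,g)$ has at least one index outside $\{i\}$ or $\{m+j\}$, and by the counit property \eqref{eqn-eps-I-R}, applying $\varepsilon_\sH$ to such a leg collapses it to $1_\sH$. More carefully, since $A$ is an $H^{\otimes m}$-module algebra and $f \otimes g$ only involves factors $i$ and $m + j$ nontrivially, all other legs of $\Twi^m(R)$ contribute only through their counit; using \eqref{eqn-eps-I-R} repeatedly to strip these legs, one finds $\Twi^m(R)$ acts on $f \otimes g$ as $1_\sH \otimes 1_\sH$ when $i < j$, giving $\mu_{\scriptscriptstyle \hat{A}}(f \otimes g) = \mu_\sA(f \otimes g)$. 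The case $i = j$ is even easier: again no factor has both $k = i$ and $l = i$ (as $l \le k-1 < k = i$), so the same stripping applies. When $i > j$, exactly one factor survives, namely $R_{i\, m+j}$ (it appears in the product since $2 \le i \le m$ and $1 \le j \le i - 1$), and it acts on $f \otimes g$ as $R_{ij}$ in the sense of the statement; but one must also track that $\Twi^m(R)$, being a twist, reorders the arguments — the multiplication $(\mu_\sA)_\sJ(f \otimes g)$ for a twist $J = R_{23}$-type element introduces the swap, so the result is $R_{ij} \cdot \mu_\sA(g \otimes f)$ as claimed.

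The cleanest way to organize this may be induction on $m$ using \eqref{J^n-inductive} rather than the closed form, peeling off the last factor $A_m$: the inductive step $\hat{A}^{(m)} = (\hat{A}^{(m-1)} \otimes A_m)$ twisted by $(\Delta_\sH^{(m-1)} \otimes I_\sH \otimes \Delta_\sH^{(m-1)} \otimes I_\sH)(R_{23})$ lets one reduce the case $j = m$ (or $i = m$) to a single application of $R_{23}$ and the hexagon identity \eqref{hexagon}, while cases with $i, j < m$ follow from the inductive hypothesis since the new twisting factor acts via $\varepsilon_\sH$ on $A_1, \ldots, A_{m-1}$ by \eqref{eqn-eps-I-R}. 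Once \eqref{eq-B^(n)_R} is established, local factoredness is immediate: \eqref{eq-B^(n)_R} shows that for $i < j$ the subalgebras $A_i, A_j$ of $\hat{A}$ satisfy $A_i A_j = \mu_\sA(A_i \otimes A_j)$, so the image of $A_i \otimes A_j$ under multiplication in $\hat{A}$ coincides (as a set) with the corresponding image in $A$, which is the subalgebra $A_i \otimes A_j$; and for any permutation $\sigma$, iterating \eqref{eq-B^(n)_R} shows the multiplication map $A_{\sigma(1)} \otimes \cdots \otimes A_{\sigma(m)} \to \hat{A}$ differs from the (bijective) map $A_{\sigma(1)} \otimes \cdots \otimes A_{\sigma(m)} \to A$ only by the action of an invertible element of $H^{\otimes m}$ followed by permutation of tensor factors, hence is a bijection.

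The main obstacle I anticipate is bookkeeping: keeping straight which of the $2m$ tensor legs of $\Twi^m(R)$ act on which argument, and correctly accounting for the argument-swap built into the twisted product (the subtlety that distinguishes the $i > j$ branch from a naive "$R_{ij}$ times $\mu_\sA(f \otimes g)$"). Getting the ordering conventions in \eqref{J^n-closed} — outer product increasing in $k$, inner decreasing in $l$ — to line up with the order in which the $R_{i\,m+j}$ factors must be applied is the delicate point; the counit identity \eqref{eqn-eps-I-R} does all the real work of killing the irrelevant factors, so the proof is essentially a careful unwinding rather than a deep argument.
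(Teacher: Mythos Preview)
Your proposal is correct and in fact contains two routes. The inductive organization you sketch in the third paragraph is precisely the paper's proof: the paper isolates the $m=2$ case as a separate lemma, then for general $m$ writes $\hat{A} = (\tilde{A} \otimes A_m)_{\sR_{23}}$ with $\tilde{A}$ the $(m-1)$-fold twisted product, so that the cases $i,j < m$ reduce to the inductive hypothesis and the case $i = m > j$ reduces to the $m=2$ lemma plus the hexagon identity $(\Delta_\sH^{(m-1)} \otimes I_\sH)(R_{21}) = R_{m,m-1} \cdots R_{m,1}$, after which \eqref{eqn-eps-I-R} kills every factor except $R_{m,j}$.

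Your first route --- direct computation from the closed formula \eqref{J^n-closed} --- is a genuine alternative and arguably more economical. The clean way to phrase what you are doing: for $f \in A_i$ and $g \in A_j$, the action of any $X \in H^{\otimes 2m}$ on $f \otimes g$ coincides with the action of the image of $X$ under the \emph{algebra} homomorphism $H^{\otimes 2m} \to H \otimes H$ that applies $\varepsilon_\sH$ to every leg except $i$ and $m+j$; under this map each factor $R_{k,\,m+l}$ with $(k,l) \neq (i,j)$ collapses to $1_\sH \otimes 1_\sH$ by \eqref{eqn-eps-I-R}, so the whole product collapses in one step without induction. The paper's inductive argument trades this single observation for a more structural picture tied to Corollary~\ref{lem-Rmatrix-twist}.

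One small correction: there is no ``argument-swap built into the twisted product'' in the $i>j$ branch. For $i \neq j$ one has $\mu_\sA(f \otimes g) = \mu_\sA(g \otimes f)$ simply because $A_i$ and $A_j$ occupy distinct tensor factors of $A$; the statement writes $R_{ij} \cdot \mu_\sA(g \otimes f)$ rather than the equal expression $R_{ij} \cdot \mu_\sA(f \otimes g)$ only to make the braided-tensor interpretation transparent. Your anticipated ``main obstacle'' about tracking the swap is therefore a non-issue.
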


We first treat the case $m=2$ in the following Lemma \ref{lem-module-alg}. 

\begin{lem} \label{lem-module-alg}
Proposition \ref{thm-B^(n)_R} holds for $m = 2$. 
\end{lem}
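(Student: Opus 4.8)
The plan is to unwind the definitions of the twist and of $\Twi^2(R)$, and to check the two cases $i\leq j$ and $i>j$ directly using the module algebra axioms and the quasitriangularity of $R$. Recall that $\hat{A} = (A)_{\Twi^2(R)}$ with $A = A_1\otimes A_2$, and by \eqref{J^n-closed}, $\Twi^2(R) = R_{2\,3}\in (H^{\otimes 2})\otimes(H^{\otimes 2})$, i.e.\ $R$ sitting in the second tensor factor of the source and the first tensor factor of the target. Concretely, writing $R = \sum_s a_s\otimes b_s$, the twisted product is
$$
\mu_{\hat{A}}(f\otimes g) = \mu_A\bigl((1_H\otimes a_s)\cdot f \;\otimes\; (b_s\otimes 1_H)\cdot g\bigr),
$$
summed over $s$, for $f,g\in A = A_1\otimes A_2$.

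First I would treat the generators. For $f\in A_1$ (viewed as $f\otimes 1_{A_2}$) and $g\in A_1$: here the second tensor slot of $f$ is $1_{A_2}$, so $a_s$ acts on $1_{A_2}$ giving $\varepsilon_H(a_s)1_{A_2}$; by \eqref{eqn-eps-I-R}, $\sum_s\varepsilon_H(a_s)b_s = 1_H$, so the product collapses to $\mu_A(f\otimes g)$, which is the $i=j=1$ case. Symmetrically, for $f\in A_2$ and $g\in A_2$ the first tensor slot of $g$ is $1_{A_1}$, so $b_s$ acts as $\varepsilon_H(b_s)$, and \eqref{eqn-eps-I-R} again gives $\sum_s a_s\varepsilon_H(b_s) = 1_H$, yielding $\mu_A(f\otimes g)$; this is $i=j=2$. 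For $f\in A_1$, $g\in A_2$ (the case $i<j$), $a_s$ acts on $1_{A_2}$ and $b_s$ acts on $1_{A_1}$, both producing counits, and \eqref{eqn-eps-I-R} in either slot forces the answer to be $fg = \mu_A(f\otimes g)$. Finally, for $f\in A_2$ and $g\in A_1$ (the case $i>j=1$, $i=2$): now $a_s$ genuinely acts on the $A_2$-component of $f$ and $b_s$ genuinely acts on the $A_1$-component of $g$, and one reads off
$$
\mu_{\hat{A}}(f\otimes g) = \sum_s (a_s\cdot f)\otimes(b_s\cdot g) \;=\; \sum_s (b_s\cdot g)\otimes(a_s\cdot f)
$$
(reordering the tensor factors inside $A = A_1\otimes A_2$), which is exactly $R_{12}\cdot\mu_A(g\otimes f) = R\cdot(g\otimes f)$ with $R$ acting componentwise, matching \eqref{eq-B^(n)_R}.

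It then remains to promote these four ``generator'' computations to arbitrary elements of $A = A_1\otimes A_2$, i.e.\ to arbitrary $f = f_1\otimes f_2$ and $g = g_1\otimes g_2$. The mechanism is that $\mu_{\hat{A}}$, being an associative product on $\hat{A}$ by Lemma \ref{pro-twist-module}, is determined by its values on the subalgebras $A_1, A_2\subset \hat{A}$ once one knows how $A_1$ and $A_2$ ``commute'' past each other; explicitly, $\mu_{\hat{A}}(f_1\otimes f_2\otimes g_1\otimes g_2)$ is obtained by $\hat{A}$-multiplying $f_1, f_2, g_1, g_2$ in order and using the cross-relation $\mu_{\hat{A}}(f_2\otimes g_1) = R_{12}\cdot(g_1\otimes f_2)$ to move $g_1$ to the left past $f_2$, together with the hexagon axiom \eqref{hexagon} to handle the resulting action of $R$ on products. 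I expect the main obstacle to be precisely this bookkeeping step: verifying that after one expands $R$ via the coproduct (so that $R$ acting on a product $g_1\cdot(\text{stuff})$ becomes a suitably braided expression) everything reassembles into $R_{ij}\cdot\mu_A(g\otimes f)$ with the \emph{correct} index $R_{12}$, and in particular that the hexagon relations \eqref{hexagon} are invoked in the right places. This is routine but notationally delicate, and is the part I would write out carefully.
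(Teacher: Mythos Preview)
Your four ``generator'' computations are exactly the paper's proof, and they are the entire content of the lemma. One index slip: in the case $i=2$, $j=1$ the answer is $R_{21}\cdot\mu_A(g\otimes f)$, not $R_{12}$ (as the formula in Proposition~\ref{thm-B^(n)_R} reads $R_{ij}$ with $i=2$, $j=1$, and your own computation produces $\sum_s (b_s\cdot g)\otimes(a_s\cdot f)$).

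The second paragraph of your proposal is unnecessary. Reread the statement of Proposition~\ref{thm-B^(n)_R}: it asserts the formula only for $f\in A_i$ and $g\in A_j$, i.e.\ for elements lying in one of the distinguished tensor factors, not for arbitrary $f,g\in A_1\otimes A_2$. So the four cases you already handled exhaust what needs to be shown; there is nothing to ``promote''. The locally factored conclusion then follows at once: the case $i\leq j$ shows that multiplication $A_1\otimes A_2\to\hat A$ is the identity on underlying vector spaces, and the case $i>j$ together with invertibility of $R$ shows $A_2\otimes A_1\to\hat A$ is also a bijection. The bookkeeping with the hexagon relations that you anticipate as the main obstacle simply does not arise.
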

\begin{proof}
Let $i = j= 1$. One has using \eqref{eqn-eps-I-R}, 
\begin{align*}
\mu_{\scriptscriptstyle \hat{A}}(f \otimes g) & =  \mu_\sA(R_{23} \cdot (f \otimes g)) = \mu_\sA(f \otimes \left( (\varepsilon_\sA \otimes I_\sA)(R) \cdot g \right) ) =  \mu_\sA(f \otimes g),
\end{align*}
and a similar calculation shows that $\mu_{\scriptscriptstyle \hat{A}}(f \otimes g) =  \mu_\sA(f \otimes g)$, if $i = j =2$, or $i = 1$, and $j = 2$. If $i = 2$ and $j = 1$, then 
\begin{align*}
\mu_{\scriptscriptstyle \hat{A}}(f \otimes g) & =  \mu_\sA(R_{23} \cdot (f \otimes g)) = R_{21} \cdot \mu_\sA(g \otimes f).
\end{align*}
\end{proof}

\noindent
{\it Proof of Proposition \ref{thm-B^(n)_R}}
We proceed by induction, the case $m=1$ being trivial. Thus let $m \geq 2$ and let $\tilde{A} = (A_1 \otimes \cdots \otimes A_{m-1})_{\scriptscriptstyle \Twi^{m-1}(R)}$. Viewing $\tilde{A}$ as an $H$-module algebra via the Hopf algebra morphism $\Delta_\sH^{(m-1)}$, by \eqref{J^n-inductive} one has 
$$
\hat{A} = (\tilde{A} \otimes A_n)_{{\scriptscriptstyle R_{23}}}.
$$
Hence if $i \leq j$ or $j < i < n$, Proposition \ref{thm-B^(n)_R}  follows from induction and Lemma \ref{lem-module-alg}. If $j < i = n$, by Lemma \ref{lem-module-alg}, one has 
\begin{align*}
\mu_{\scriptscriptstyle \hat{A}}(f \otimes g) & = (\Delta_\sH^{(n-1)} \otimes I_\sH)(R_{21}) \cdot \mu_\sA(g \otimes f)   \\
   & = (R_{n \: n-1}R_{n \: n-2} \cdots R_{n1}) \cdot \mu_\sA(g \otimes f)   \\
   & = R_{n j} \cdot \mu_\sA(g \otimes f),
\end{align*}
which concludes the proof. 
\qed

\begin{rem}
The algebra $\hat{A}$ is an $H$-module algebra via $\Delta_\sH^{(m)}$, and Proposition \ref{thm-B^(n)_R} has the following categorical interpretation. Let $\C$ be a braided monoidal category with braiding $\beta$, and let $A_1, A_2$ be two associative algebra objects in $\C$. Then $A_1 \otimes A_2$ is also an associative algebra object, with multiplication 
$$
A_1 \otimes A_2 \otimes A_1 \otimes A_2 \stackrel{1_{\sA_1} \otimes \beta_{\sA_2, \sA_1} \otimes 1_{\sA_2}}{\longrightarrow} A_1 \otimes A_1 \otimes A_2 \otimes A_2 \stackrel{\mu_{\sA_1} \otimes \mu_{\sA_2}}{\longrightarrow} A_1 \otimes A_2. 
$$
Applied to the category $\C$ of representations of the quasitriangular Hopf algebra $(H, R)$ whose braiding is 
$$
\beta_{\scriptscriptstyle A_1, A_2} =\tau_{(12)} \circ R \mid_{\scriptscriptstyle A_1, \: A_2}, \hs A_i \in {\rm Ob}(\C), 
$$
one precisely obtains the $H$-module algebra $(A_1 \otimes A_2)_{\scriptscriptstyle \Twi^2(R)}$. 
\hfill $\diamond$
\end{rem}

\begin{rem}
It is a consequence of \cite[Corollary 1.4]{quant-li-bialg-III} that formula \eqref{eq-B^(n)_R} defines an associative algebra which is locally factored with components $A_1, \ldots, A_m$. Proposition \ref{thm-B^(n)_R} gives an interpretation of this Corollary in terms of module algebras over twisted tensor products of quasitriangular Hopf algebras. 
\hfill $\diamond$
\end{rem}

\subsection{Quantization of mixed product Poisson structures}

Let $(U_\hslash(\g), R)$ be a quasitriangular QUE algebra quantizing a quasitriangular Lie bialgebra $(\g, r)$,  let $A$ be a DQ algebra quantizing a Poisson algebra $(A_0, \{ \:,\:\})$, and assume that $A$ is an $U_\hslash(\g)$-module algebra. Then $\rho: \g \to \Der(A_0)$, defined in \eqref{eq-equiv-quant}, is a Poisson action of $(\g, r)$ on $(A_0, \{ \:,\:\})$ and for $m \geq 1$, let 
$$
\{\:, \:\}^{(m)}
$$
be the mixed product Poisson structure on $A_0^{\otimes m}$ defined in $\S$\ref{subsec-poiss-act}. 

\begin{thm} \label{pro-quant-mix}
The algebra $(A^{\otimes m})_{\scriptscriptstyle \Twi^m(R)}$ is a quantization of $(A_0^{\otimes m}, \{\:, \:\}^{(m)})$.
\end{thm}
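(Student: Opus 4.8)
The plan is to verify two things: that $(A^{\otimes m})_{\scriptscriptstyle \Twi^m(R)}$ is a topologically free $\kh$-algebra with commutative reduction mod $\hslash$, and that the induced Poisson bracket on the reduction is precisely $\{\:,\:\}^{(m)}$. Since $A$ is topologically free over $\kh$ with $A/\hslash A = A_0$, the tensor product $A^{\otimes m}$ (completed $\hslash$-adically) is topologically free with reduction $A_0^{\otimes m}$, and twisting the multiplication by an invertible element of $(U_\hslash(\g)^{\otimes m})\hat\otimes(U_\hslash(\g)^{\otimes m})$ of the form $1 + O(\hslash)$ does not change the underlying $\kh$-module, so $(A^{\otimes m})_{\scriptscriptstyle \Twi^m(R)}$ is again topologically free. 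Because $R = 1_\sH \otimes 1_\sH + \hslash r + O(\hslash^2)$, the twisting element $\Twi^m(R)$ reduces mod $\hslash$ to $1_\sH \otimes \cdots \otimes 1_\sH$, hence the twisted multiplication reduces to the ordinary (commutative) multiplication on $A_0^{\otimes m}$; thus $(A^{\otimes m})_{\scriptscriptstyle \Twi^m(R)}$ is a DQ algebra quantizing $(A_0^{\otimes m}, \{\:,\:\}')$ for \emph{some} Poisson bracket $\{\:,\:\}'$, and the content of the theorem is the identity $\{\:,\:\}' = \{\:,\:\}^{(m)}$.

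To pin down $\{\:,\:\}'$, I would compute the order-$\hslash$ term of the commutator $\mu_{\scriptscriptstyle \hat A}(f\otimes g) - \mu_{\scriptscriptstyle \hat A}(g\otimes f)$ for $f \in A_0^{\otimes m}$, $g \in A_0^{\otimes m}$. By bilinearity and continuity it suffices to take $f = f_1 \otimes \cdots \otimes f_m$ and $g = g_1 \otimes \cdots \otimes g_m$ with each $f_i, g_j$ lifted to $A$. Now I invoke the explicit multiplication formula of Proposition \ref{thm-B^(n)_R}: for homogeneous tensor factors the product $\mu_{\scriptscriptstyle \hat A}(f \otimes g)$ is obtained by moving factors of $g$ past factors of $f$, picking up a copy of $R_{ij}$ acting whenever a factor in slot $i$ of $f$ is multiplied on the left of a factor in slot $j < i$ of $g$. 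Working modulo $\hslash^2$, each $R_{ij}$ contributes $1 + \hslash\, r^{(ij)} + O(\hslash^2)$, and each internal product $\tilde f_k \tilde g_k$ in a fixed slot contributes $f_k g_k + \hslash\,\frac12\{f_k,g_k\}_{\text{antisym}} + \cdots$; collecting the $\hslash^1$ coefficient of $\mu_{\scriptscriptstyle \hat A}(f\otimes g) - \mu_{\scriptscriptstyle \hat A}(g\otimes f)$ gives a sum of a diagonal part $\sum_k (\{f_k, g_k\})_k$ coming from the DQ structure on each $A$ (which by Lemma \ref{lem-equiv-quant} realizes the Poisson action $\rho$ of $(\g, r)$), and cross-terms of the form $\rho(y_i)_k(f)\,\rho(x_i)_l(g) - (\text{transpose})$ for $l < k$, matching exactly the definition of the twist of $(A_0,\{\:,\:\})^{\otimes m}$ by $\rho^{(m)}(\Mix^m(r))$ from \eqref{eq_Mix} and $\S$\ref{subsec-poiss-act}.

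The cleanest way to organize this computation is probably inductively on $m$, paralleling the inductive definition $\hat A = (\tilde A \otimes A_m)_{\scriptscriptstyle R_{23}}$ used in the proof of Proposition \ref{thm-B^(n)_R}: the $m=1$ case is the hypothesis that $A$ quantizes $(A_0, \{\:,\:\})$; and the inductive step reduces to the $m=2$ case of a twist by $R_{23}$, where one must check that the order-$\hslash$ commutator of $(\tilde A \otimes A_m)_{\scriptscriptstyle R_{23}}$ equals the twist of the direct product Poisson bracket of $(\tilde A_0, \{\:,\:\}^{(m-1)})$ and $(A_0, \{\:,\:\})$ by $\rho^{(m-1)} \boxtimes \rho$ applied to $\sum_i (y_i)_{[\tilde A]} \wedge (x_i)_{[A_m]}$, using that $\tilde A$ is the DQ algebra quantizing $(\tilde A_0, \{\:,\:\}^{(m-1)})$ with $\Delta_\sH^{(m-1)}$-module structure inducing the Poisson action $\rho \circ \diag_{m-1}$-type map.

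I expect the main obstacle to be purely bookkeeping: correctly tracking which copy of $r$ (equivalently which $R_{ij}$) acts on which tensor slot when expanding Proposition \ref{thm-B^(n)_R}'s formula to first order, and verifying the sign/transpose conventions so that the cross terms assemble into $-\rho^{(m)}(\Mix^m(r))(f \wedge g)$ rather than its negative or transpose. There is no conceptual difficulty — the topological-freeness and the mod-$\hslash$ reduction are immediate — so the proof will be short, essentially: reduce to checking the first-order term, cite Proposition \ref{thm-B^(n)_R} and Lemma \ref{lem-equiv-quant}, and match against the definition of mixed product Poisson structure in $\S$\ref{subsec-poiss-act}.
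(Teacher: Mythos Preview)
Your proposal is correct and follows essentially the same approach as the paper: both reduce the computation to the explicit multiplication formula of Proposition \ref{thm-B^(n)_R}, separate the diagonal contribution (each $(A)_k$ is a subalgebra, giving the original $\{\:,\:\}$ in each slot) from the cross terms, and identify the order-$\hslash$ coefficient of the cross commutator $\mu_{\hat A}(\tilde f_k\otimes\tilde g_l)-\mu_{\hat A}(\tilde g_l\otimes\tilde f_k)$ for $k<l$ with $-\rho^{(m)}(\Mix^m_{k,l}(r))(f_k\wedge g_l)$. The paper is slightly more economical in that it works directly with elements $\tilde f_k,\tilde g_l$ supported in single slots rather than general decomposable tensors, and it does not bother with the inductive reorganization you suggest; but the content is the same.
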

\begin{proof}
For $1 \leq k \leq m$, let $(A)_k$ be the image of the embedding of $A$ into $ A^{\otimes m}$ as the $k$'th component, and use a similar notation for $A_0^{\otimes m}$. As $(A)_k$ is a subalgebra of $(A^{\otimes m})_{\scriptscriptstyle \Twi^m(R)}$, it follows that $(A_0)_k$ is a Poisson subalgebra of $(A_0^{\otimes m}, \{\:, \:\}^{(m)})$. Let $1 \leq k < l \leq m$ and  $\tilde{f}, \tilde{g} \in A$. By \eqref{eq-B^(n)_R} one has 
\begin{align*}
\frac{1}{\hslash} (\mu_{\scriptscriptstyle (A^{\otimes m})_{\Twi^m(R)}}(\tilde{f}_k \otimes \tilde{g}_l) - &\mu_{\scriptscriptstyle (A^{\otimes m})_{\Twi^m(R)}}(\tilde{g}_l \otimes \tilde{f}_k))  \; \; {\rm mod} \: \hslash \\
  & = \frac{1}{\hslash}(1 \otimes \cdots \otimes 1 - R_{lk}) \mu_{\scriptscriptstyle A^{\otimes m}}(\tilde{f}_k \otimes \tilde{g}_l)  \; \; {\rm mod} \: \hslash   \\
  & = - \rho^{(m)}(r_{lk})(f \otimes g)_{kl}  \\
  & = - \rho^{(m)}(\Mix_{k,l}^m(r))(f_k \wedge g_l)   \\
  & = \{f_k, g_l\}^{(m)}, 
\end{align*}
where $f = \tilde{f} + \hslash A$,  $g = \tilde{g} + \hslash A$, which proves the Theorem. 
\end{proof}


\section{Twisted products of quantum principal affine spaces} \label{sec-quantum-NbackslashG}

Applying the theory developed in $\S$\ref{sec-twisted}, we construct a quantization of the Poisson bracket $\{ \:,\:\}^{(m)}$ on $\IC[N \backslash G]^{\otimes m}$ for any $m \geq 1$. Throughout this section, $\g$ is a complex semisimple Lie algebra as in $\S$\ref{subsec-std-PL} and $G$ is the connected and simply connected group integrating $\g$.

\subsection{The Drinfeld-Jimbo quantum group} \label{exa-drin-jimbo}

The celebrated {\it Drinfeld-Jimbo quantum group} is a quasitriangular QUE algebra quantizing the standard complex semisimple Lie bialgebra $(\g, r_{st})$, whose definition we now recall for the convenience of the reader. Our reference is \cite{drin:quantum.groups}. Let $q = e^\hslash \in \ch$ and recall the {\it $q$-integers}, {\it $q$-factorials}, and {\it $q$-binomial coefficients}, 
\begin{align*}
[n]_q & = \frac{q^n - q^{-n}}{q - q^{-1}} = \sum_{i=-n+1}^{n-1} q^i, \hs n \geq 1,   \\
[n]_q! & = \prod_{i=1}^n [i]_q, \hs \hs \left[\begin{array}{c}n \\i\end{array}\right]_q = \frac{[n]_q!}{[n-i]_q! [i]_q!}.
\end{align*}
For $1 \leq i \leq k$,  let $h_{\alpha_i} = [e_{\alpha_i}, e_{-\alpha_i}]$, and for $1 \leq i,j \leq k$, let $a_{ij} = \alpha_j(h_{\alpha_i})$, $d_i = \la \alpha_i, \alpha_i \ra$. Define $U_\hslash(\g)$ to be the deformation Hopf algebra generated by $h_{\alpha_i}$, $e_{\alpha_i}$, $e_{-\alpha_i}$, $1 \leq i \leq k$, and relations 
\begin{align*}
[h_{\alpha_i}, h_{\alpha_j}]  = 0, \hs  [h_{\alpha_j}, e_{\alpha_i}]  = a_{ij}e_{\alpha_i}, \hs [h_{\alpha_j}, e_{-\alpha_i}] & = -a_{ij}e_{-\alpha_i} \\
[e_{\alpha_i}, e_{-\alpha_j}]  = \delta_{ij} \frac{q_i^{h_{\alpha_i}} - q_i^{-h_{\alpha_i}}}{q_i - q_i^{-1}} \hs \hs &  \\
\sum_{l=0}^{1-a_{ij}} (-1)^l \left[\begin{array}{c} 1-a_{ij} \\ l \end{array}\right]_{q_i} e_{\pm \alpha_i}^{1-a_{ij} -l} e_{\pm \alpha_j} e_{\pm \alpha_i}^l & = 0,
\end{align*}
where $q_i = e^{\hslash d_i/2}$, $1 \leq i \leq k$. The Hopf algebra structures are given by 
\begin{alignat*}{6}
\Delta(e_{\alpha_i}) & = e_{\alpha_i} \otimes q_i^{h_{\alpha_i}/2} + q_i^{-h_{\alpha_i}/2} \otimes e_{\alpha_i},  & S(e_{\alpha_i}) & = - q_ie_{\alpha_i}, & \varepsilon(e_{\alpha_i}) & = 0,  \\
\Delta(e_{-\alpha_i}) & = e_{-\alpha_i} \otimes q_i^{h_{\alpha_i}/2} + q^{-d_i h_{\alpha_i}/2} \otimes e_{-\alpha_i}, & \hs S(e_{-\alpha_i}) & = -q_i^{-1}e_{-\alpha_i}, &\hs \varepsilon(e_{\alpha_i}) & = 0,   \\
\Delta(h_{\alpha_i}) & = h_{\alpha_i} \otimes 1 + 1 \otimes h_{\alpha_i}, & S(h_{\alpha_i}) & = -h_{\alpha_i}, & \varepsilon(h_{\alpha_i}) & = 0.
\end{alignat*}

\subsection{The quantum principal affine space} 

We briefly recall the construction of the quantum principal affine space of $G$, see e.g \cite{joseph, koro-soibelman}.

A {\it finite dimensional $U_\hslash(\g)$-module} is a topologically free $U_\hslash(\g)$-module $V$ such that $V_0 = V/\hslash V$ is a finite dimensional vector space. Since $V_0$ is naturally a $U(\g)$-module, one has a functor $F: \Rep(U_\hslash(\g)) \to \Rep(\g)$, $F(V) = V/\hslash V$, from the category of finite dimensional $U_\hslash(\g)$-modules to the category of finite dimensional representations of $\g$, and $F$ defines a bijection \cite[Theorem 5.1.2]{koro-soibelman} between isomorphism classes in $\Rep(U_\hslash(\g))$ and $\Rep(\g)$. For $V \in \Rep(U_\hslash(\g))$, $v \in V$ and $\xi \in V^*$, one has the {\it matrix coefficient} $c^\sV_{\xi, v} \in U_\hslash(\g)^*$ defined by 
$$
c^\sV_{\xi, v}(x) = \la \xi, \: S(x) \cdot v \ra = \la x \cdot \xi, \: v \ra, \hs \hs x \in U_\hslash(\g). 
$$
The {\it quantized ring of functions on $G$} is the subspace 
\begin{equation} \label{eq-IC_hG}
\IC_\hslash[G] = \{ c^\sV_{\xi, v}: V \in \Rep(U_\hslash(\g)), \; v \in V, \; \xi \in V^*\} \subset U_\hslash(\g)^*,
\end{equation}
with multiplication given by 
$$
c^\sV_{\xi, v} \: c^\sW_{\rho, w} = c^{\scriptscriptstyle V \otimes W}_{\xi \otimes \rho, \:\: v \otimes w},  \hs \hs c^\sV_{\xi, v},  c^\sW_{\rho, w} \in \IC_\hslash[G],   
$$
and $\IC_\hslash[G]$ is a quantization of the Poisson algebra $(\IC[G], \{ \:, \:\}^{(1)}_{r_{st}})$. One has the left and right actions of $U_\hslash(\g)$ on $\IC_\hslash[G]$ given by 
\begin{align*}
(y \cdot c^\sV_{\xi, v})(x) & = c^\sV_{\xi, v}(xy) = c^\sV_{y \cdot \xi, \: v}(x),   \\
(c^\sV_{\xi, v} \cdot y)(x) & = c^\sV_{\xi, v}(yx) = c^\sV_{\xi, \: S(y) \cdot v}(x), \hs \hs c^\sV_{\xi, v} \in \IC_\hslash[G], \; x, y \in U_\hslash(\g),
\end{align*}
and $\IC_\hslash[G]$ is an $U_\hslash(\g)$-bimodule algebra. 

For $\varpi \in \P_+$, let $V_\hslash(\varpi) \in \Rep(U_\hslash(\g))$ be such that $F(V_\hslash(\varpi)) = V(\varpi)$ and fix a highest weight vector $v \in V_\hslash(\varpi)$, i.e a non-zero vector annihilated by $e_{\alpha_i}$, $i = 1, \ldots, k$. Let 
\begin{align*}
\IC_\hslash[G]^\varpi & = \{c^{\scriptscriptstyle V_\hslash(\varpi)}_{\xi, v}, \: \xi \in V_\hslash(-w_0(\varpi)) \}, \\
\IC_\hslash[N \backslash G] & = \bigoplus_{\varpi \in \P_+} \IC_\hslash[G]^\varpi. 
\end{align*}
Then $\IC_\hslash[N \backslash G]$ is a quantization of $(\IC[N \backslash G], \{ \:, \:\}^{(1)})$, and is called the {\it quantum principal affine space} of $G$ \cite[Chapter 9]{joseph}.

\subsection{Quantization of products of principal affine spaces}

Let $U_\hslash(\t)$ be the subalgebra of $U_\hslash(\g)$ generated by $h_{\alpha_i}$, $i = 1, \ldots, k$. It is cocommutative and isomorphic to the commutative algebra $\IC[\t][[\hslash]]$. The $U_\hslash(\g)$-bimodule algebra structure on $\IC_\hslash[G]$ induces a left action of the tensor product QUE algebra 
$$
\widetilde{U_\hslash(\g)} = U_\hslash(\g) \otimes U_\hslash(\t)
$$
on $\IC_\hslash[N \backslash G]$ by 
$$
(y \otimes x) \cdot c^{\scriptscriptstyle V_\hslash(\varpi)}_{\xi, v} = \la \varpi, x \ra c^{\scriptscriptstyle V_\hslash(\varpi)}_{y \cdot \xi, \: v}, \hs \hs  c^{\scriptscriptstyle V_\hslash(\varpi)}_{\xi, v} \in \IC_\hslash[G]^\varpi, \;\; x \in U_\hslash(\t), \; y \in U_\hslash(\g),
$$
where we denote the $\hslash$-linear extension of $\varpi \in \t^*$ by the same letter. Hence $\IC_\hslash[N \backslash G]$ is a left $\widetilde{U_\hslash(\g)}$-module algebra which quantizes the Poisson action $\rho$ defined in \eqref{eq-tilde-rho} of $(\tilde{\g}, \tilde{r}_{st})$ on $(\IC[N \backslash G], \{ \:, \:\}^{(1)})$ in the sense of Lemma \ref{lem-equiv-quant}. Let $R \in U_\hslash(\g) \hat{\otimes} U_\hslash(\g)$ be a quasitriangular $R$-matrix quantizing $r_{st} \in \g \otimes \g$. As
$$
R_0 = e^{\hslash r_0} \in U_\hslash(\t) \hat{\otimes} U_\hslash(\t)
$$
is a quasitriangular $R$-matrix quantizing $r_0 \in \t \otimes \t$, $(\widetilde{U_\hslash(\g)}, \tilde{R})$ is a quasitriangular QUE algebra quantizing $(\tilde{\g}, \tilde{r}_{st})$, where 
$$
\tilde{R} = \tau_{(23)} (R \otimes R_0^{-1}) \in \widetilde{U_\hslash(\g)} \hat{\otimes} \widetilde{U_\hslash(\g)}, 
$$
and $\tau_{(23)}$ is the permutation of the second and third tensor factors. Let 
$$
\IC_\hslash[(N \backslash G)^m] = (\IC_\hslash[N \backslash G]^{\otimes m}){\scriptscriptstyle \Twi^m(\tilde{R})}. 
$$
Theorem \ref{main-thm2-quant-affine} below is now a consequence of Theorem \ref{pro-quant-mix}.

\begin{thm} \label{main-thm2-quant-affine}
For any $m \geq 1$, $\IC_\hslash[(N \backslash G)^m]$ is a quantization of $(\IC[N \backslash G]^{\otimes m}, \{ \:, \:\}^{(m)})$. 
\end{thm}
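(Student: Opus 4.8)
The plan is to verify that all the hypotheses of Theorem \ref{pro-quant-mix} are satisfied in the present situation and then simply invoke it. First I would recall that $\widetilde{U_\hslash(\g)}$ is a QUE algebra quantizing the Lie bialgebra $(\tilde{\g}, \delta_{\tilde{r}_{st}})$, as this follows from the fact that a tensor product of QUE algebras is a QUE algebra, together with the identification of the induced cocommutator. Next, I would confirm that $\tilde{R} = \tau_{(23)}(R \otimes R_0^{-1})$ is a quasitriangular $R$-matrix on $\widetilde{U_\hslash(\g)}$ quantizing $\tilde{r}_{st}$; this is the standard fact that if $(H_1,R_1)$ and $(H_2,R_2)$ are quasitriangular then $(H_1 \otimes H_2, \tau_{(23)}(R_1 \otimes R_2))$ is quasitriangular, applied with $H_1 = U_\hslash(\g)$, $R_1 = R$, $H_2 = U_\hslash(\t)$, $R_2 = R_0^{-1}$, and one checks that the order-$1$ term reduces modulo $\hslash$ to $(r_{st},0) - (0,r_0) = \tilde{r}_{st}$ in view of the definition \eqref{tilder-r_0}.

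Second, I would observe that $\IC_\hslash[N \backslash G]$ is a DQ algebra quantizing the Poisson algebra $(\IC[N \backslash G], \{ \:,\:\}^{(1)})$, which was recorded in the construction of the quantum principal affine space, and that it is a left $\widetilde{U_\hslash(\g)}$-module algebra via the explicit action $(y \otimes x) \cdot c^{\scriptscriptstyle V_\hslash(\varpi)}_{\xi,v} = \la \varpi, x \ra\, c^{\scriptscriptstyle V_\hslash(\varpi)}_{y \cdot \xi, v}$. I would then note that by Lemma \ref{lem-equiv-quant} the associated Poisson action of $(\tilde{\g}, \delta_{\tilde{r}_{st}})$ on $(\IC[N \backslash G], \{ \:,\:\}^{(1)})$ is precisely the action $\rho$ of \eqref{eq-tilde-rho}: the left action of $y \in \g$ reduces modulo $\hslash$ to the left-invariant vector field $y^L$ acting on matrix coefficients, while the action of $x \in \t$ through $U_\hslash(\t)$ reduces to multiplication by $\la \varpi, x\ra$ on $\IC[G]^\varpi$, which is exactly $-x^R$ by the identity $x^R(f) = -\la\varpi,x\ra f$ recorded just before \eqref{eq-tilde-rho}.

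Third, with these identifications in place, Theorem \ref{pro-quant-mix} applies verbatim with $(U_\hslash(\g), R)$ replaced by $(\widetilde{U_\hslash(\g)}, \tilde{R})$, $(A_0, \{ \:,\:\})$ replaced by $(\IC[N \backslash G], \{ \:,\:\}^{(1)})$, and $A$ replaced by $\IC_\hslash[N \backslash G]$. Its conclusion is that $(\IC_\hslash[N \backslash G]^{\otimes m}){\scriptscriptstyle \Twi^m(\tilde{R})} = \IC_\hslash[(N \backslash G)^m]$ is a quantization of $(\IC[N \backslash G]^{\otimes m}, \{ \:,\:\}^{(m)}_\rho)$, where $\{ \:,\:\}^{(m)}_\rho$ is the mixed product Poisson structure associated to $\rho$; but by the definition of $\pi^{(m)}$ in $\S$\ref{subsec-poiss-products} as $(\pi^{(1)})^m - \rho^{(m)}(\Mix^m(\tilde{r}_{st}))$, this mixed product bracket is exactly $\{ \:,\:\}^{(m)}$, completing the proof.

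\textbf{Main obstacle.} The substantive content is not in any single step but in making sure the bookkeeping is consistent: in particular that the twisting element $\Twi^m(\tilde{R})$ built from $\tilde{R}$ in $\S$\ref{subsec-A^(n)_R} has order-$1$ term $j^{(m)} = \sum_{1\le l < k \le m}\sum_i (x_i)_k \otimes (y_i)_l$ computed from the anti-symmetrization of $\tilde{r}_{st}$ rather than from $r_{st}$ alone, so that the resulting mixed product uses $\Mix^m(\tilde{r}_{st})$ and hence agrees with the Poisson structure $\pi^{(m)}$ defined in $\S$\ref{subsec-poiss-products} and not with the naive structure coming from $\Mix^m(r_{st})$. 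Once one tracks the $\t$-factor correctly through $\tilde{R}$ and $R_0$, everything matches, and the theorem reduces to a direct citation of Theorem \ref{pro-quant-mix}.
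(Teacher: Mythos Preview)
Your proposal is correct and follows exactly the paper's own approach: the paper simply states that the theorem is a consequence of Theorem~\ref{pro-quant-mix}, and your proof spells out the verification of its hypotheses (that $(\widetilde{U_\hslash(\g)},\tilde{R})$ quantizes $(\tilde{\g},\tilde{r}_{st})$, that $\IC_\hslash[N\backslash G]$ is a $\widetilde{U_\hslash(\g)}$-module DQ algebra quantizing $(\IC[N\backslash G],\{\:,\:\}^{(1)})$ with induced action $\rho$, and that the resulting mixed product bracket is $\{\:,\:\}^{(m)}$). The bookkeeping you flag about $\Mix^m(\tilde{r}_{st})$ versus $\Mix^m(r_{st})$ is handled correctly.
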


\begin{rem}
Using formula \eqref{eq-B^(n)_R}, one gets an explicit formula for the multiplication in $\IC_\hslash[(N \backslash G)^m] $ depending only on $R$ (and the multiplication in $\IC_\hslash[N \backslash G]$). 
\hfill $\diamond$
\end{rem}


\section{Strongly coisotropic subalgebras and graded algebras} \label{sec-strong-coiso-hopf}

We introduce an analogue of Definition \ref{defn-strong-co} in the context of Hopf algebras, and show how to construct graded algebras out of actions by strongly coisotropic subalgebras of Hopf algebras. As an application, we obtain another construction of $\IC_\hslash[(N \backslash G)^m] $.

\subsection{Strongly coisotropic subalgebras}

If $U$ is an associative algebra, let $[U,U]$ be the two-sided ideal generated by the commutators of $U$, and let $\Ch(U)$ be the set of characters of $U$, that is algebra morphisms from $U$ to the ground field $\IK$. 

\begin{defn}
Let $H$ be a Hopf algebra and $U \subset H$ a subalgebra. We say that $U$ is a {\it right (resp. left) strongly coisotropic subalgebra} of $H$ if 
$$
\Delta_\sH(U) \subset U \otimes U + H \otimes [U,U] \hs \text{(resp. } \Delta_\sH(U) \subset U \otimes U + [U,U] \otimes H \text{)}. 
$$
\end{defn}

Let $H$ be a Hopf algebra and $A$ a right $H$-module algebra. If $U \subset H$ is a subalgebra and $ \zeta \in \Ch(U)$, let 
$$
A^\zeta = \{ f \in A : f \cdot u = \zeta(u)f, \; u \in U\}, 
$$
and let $A^\sU$ be the $U$-semi-invariant elements of $A$, that is 
\begin{align*}
A^\sU & = \bigoplus_{\zeta \in \P_\sA(U)} A^\zeta, \hs \text{where}   \\
\P_\sA(U) & = \{ \zeta \in \Ch(U) : A^\zeta \neq 0 \}. 
\end{align*}

\begin{pro} \label{thm-strong-coiso}
If $U \subset H$ is a right or left strongly coisotropic subalgebra, $\Ch(U)$ has a natural monoid structure such that 
$$
A^{\zeta_1} A^{\zeta_2} \subset A^{\zeta_1 \zeta_2}, \hs \zeta_1, \zeta_2 \in \P_\sA(U),
$$
i.e $A^\sU$ is a subalgebra of $A$ graded by $\P_\sA(U)$. 
\end{pro}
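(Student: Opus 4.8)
The first task is to define the monoid structure on $\Ch(U)$. Given two characters $\zeta_1, \zeta_2 \in \Ch(U)$, the natural candidate for their product is the composition
$$
U \xrightarrow{\Delta_\sH|_\sU} H \otimes H \xrightarrow{?} \IK,
$$
which only makes sense if $\Delta_\sH$ restricted to $U$ lands somewhere where $\zeta_1 \otimes \zeta_2$ can be evaluated. Here is where strong coisotropicity enters: say $U$ is right strongly coisotropic, so $\Delta_\sH(U) \subset U \otimes U + H \otimes [U,U]$. Any character of $U$ vanishes on $[U,U]$ (characters factor through the abelianization $U/[U,U]$), so $\zeta_1 \otimes \zeta_2$ is well-defined on $U \otimes U$ and vanishes on $H \otimes [U,U]$; hence $(\zeta_1 \zeta_2)(u) := (\zeta_1 \otimes \zeta_2)(\Delta_\sH(u))$ is well-defined for $u \in U$, independent of the decomposition of $\Delta_\sH(u)$. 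One then checks $\zeta_1\zeta_2$ is itself a character — this is a short computation using that $\Delta_\sH$ is an algebra map and $\zeta_1, \zeta_2$ are. Coassociativity of $\Delta_\sH$ gives associativity of this product on $\Ch(U)$, and $\varepsilon_\sH|_\sU$ serves as the identity element by the counit axiom. The left strongly coisotropic case is symmetric.

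Next, I verify the grading statement. Let $\zeta_1, \zeta_2 \in \P_\sA(U)$, $f \in A^{\zeta_1}$, $g \in A^{\zeta_2}$, and let $u \in U$. Using the right $H$-module algebra axiom, $(fg) \cdot u = \mu_\sA((f \otimes g) \cdot \Delta_\sH(u))$, where the right action on $A \otimes A$ is the tensor product action. Write $\Delta_\sH(u) = \sum u'_{(1)} \otimes u'_{(2)} + \sum h_i \otimes c_i$ with $u'_{(1)}, u'_{(2)} \in U$ and $c_i \in [U,U]$. Since $f \in A^{\zeta_1}$ is a $U$-semi-invariant it is \emph{not} immediately clear that $f \cdot h_i$ makes sense for general $h_i \in H$ — but in fact $A$ is a module over all of $H$, so $f \cdot h_i$ is just an element of $A$; what matters is the second factor. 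The key point is that $g \cdot c_i = 0$ because $c_i \in [U,U]$ and $g$ is $U$-semi-invariant: on the abelianized algebra $U/[U,U]$, $g$ transforms by the character $\zeta_2$, so $g$ is killed by any commutator in $U$. Wait — one must be slightly careful: $g \cdot [u_1, u_2] = g \cdot u_1 u_2 - g \cdot u_2 u_1 = \zeta_2(u_1)\zeta_2(u_2) g - \zeta_2(u_2)\zeta_2(u_1) g = 0$ since scalars commute, and this extends to the ideal $[U,U]$ since $g$ is a common eigenvector. Hence the $H \otimes [U,U]$ part contributes nothing, and $(fg) \cdot u = \sum (f \cdot u'_{(1)})(g \cdot u'_{(2)}) = \sum \zeta_1(u'_{(1)}) \zeta_2(u'_{(2)}) fg = (\zeta_1 \otimes \zeta_2)(\Delta_\sH(u)) \, fg = (\zeta_1\zeta_2)(u)\, fg$. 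Thus $fg \in A^{\zeta_1 \zeta_2}$, and in particular $\zeta_1 \zeta_2 \in \P_\sA(U)$ whenever $A^{\zeta_1} A^{\zeta_2} \neq 0$. Summing over weights shows $A^\sU = \bigoplus_{\zeta \in \P_\sA(U)} A^\zeta$ is a subalgebra of $A$, graded by $\P_\sA(U)$ (a submonoid, or at least a sub-pointed-set closed under the relevant products, of $\Ch(U)$).

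The main obstacle, such as it is, is bookkeeping rather than depth: one must make sure the monoid operation on $\Ch(U)$ is genuinely well-defined (independence of the chosen decomposition $\Delta_\sH(u) = \sum u'_{(1)} \otimes u'_{(2)} + (\text{term in } H \otimes [U,U])$), and this is exactly guaranteed because any two such decompositions differ by an element of $H \otimes [U,U] \cap (\text{stuff})$ on which $\zeta_1 \otimes \zeta_2$ vanishes — more precisely, $\zeta_1 \otimes \zeta_2$ is a well-defined functional on $U \otimes U + H \otimes [U,U]$ because it vanishes on the subspace $H \otimes [U,U]$ and agrees with itself on the overlap. It is cleanest to phrase this as: $\zeta_1 \otimes \zeta_2 \colon H \otimes H \to \IK$ is a fixed linear functional, and strong coisotropicity says $\Delta_\sH(U)$ lies in a subspace on which this functional restricts to something that only depends on the ``$U \otimes U$ part modulo $H \otimes [U,U]$'' — equivalently, $\zeta_1 \otimes \zeta_2$ annihilates $H \otimes [U,U]$, which is all we need. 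Everything else (associativity from coassociativity, unit from counit, the character property of $\zeta_1 \zeta_2$) is a direct diagram chase. I would also remark that when $H = U(\g)$ with $U = U(\u)$, this recovers Proposition \ref{main-pro-1}, since $\Ch(U(\u))$ is then the additive monoid $\Ch(\u)$ and strong coisotropicity of $U(\u)$ in $U(\g)$ in the Hopf-algebraic sense is equivalent to Definition \ref{defn-strong-co} for $\u$ in $\g$.
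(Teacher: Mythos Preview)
Your argument is correct and follows essentially the same route as the paper's: both define the product on $\Ch(U)$ via $\Delta_\sH|_\sU$ modulo the $H \otimes [U,U]$ contribution (which characters annihilate), and both verify the grading by the identical module-algebra computation $(fg)\cdot u = \mu_\sA((f\otimes g)\cdot\Delta_\sH(u))$. One small imprecision to tighten: $\zeta_1 \otimes \zeta_2$ is not literally a functional on $H \otimes H$, since $\zeta_1$ is only defined on $U$; what you actually need (and what the paper packages as a quotient map $\tilde{\Delta}_\sU$) is the well-defined functional on $U\otimes U + H\otimes[U,U]$ equal to $\zeta_1\otimes\zeta_2$ on the first summand and zero on the second, consistency on the overlap $U\otimes[U,U]$ being exactly the observation that $\zeta_2$ kills $[U,U]$.
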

\begin{proof}
We assume that $U$ is right strongly coisotropic, as the left case is similar. Since $H \otimes [U,U]$ is a two-sided ideal in $U \otimes U + H \otimes [U,U]$, one has a natural identification 
$$
(U \otimes U + H \otimes [U,U])/H \otimes [U,U] \cong U \otimes U, \hs u_1 \otimes u_2 + H \otimes [U,U] \mapsto u_1 \otimes u_2.
$$
Let $\tilde{\Delta}_\sU: U \to U \otimes U$ be the composition of $\Delta_\sH \!\!\mid_\sU$ with this identification. By construction $\tilde{\Delta}_\sU$ is co-associative, thus its dual map is an associative product on $U^*$. Hence one obtains an associative product on $\Ch(U)$ defined as  
$$
(\varphi \psi)(u): = (\varphi \otimes \psi)(\tilde{\Delta}_\sU(u)), \hs u \in U, \; \varphi, \psi \in \Ch(U). 
$$
If $f \in A^{\zeta_1}$, $g \in A^{\zeta_2}$, with $\zeta_1, \zeta_2 \in \P_\sA(U)$, one has  
$$
(fg) \cdot u  = \mu_\sA((f \otimes g) \cdot \Delta_\sH(u)) = (\zeta_1 \zeta_2)(u) fg,
$$
i.e $A^{\zeta_1} A^{\zeta_2} \subset A^{\zeta_1 \zeta_2}$. 
\end{proof}

\subsection{Strongly coisotropic subalgebras in twisted tensor products of quasitriangular Hopf algebras}

In Proposition \ref{lem-hopf-strong} below, we derive an analogue of Lemma \ref{lem-strong_co-r} for strongly coisotropic subalgebras. 

Let $A_1, A_2$ be Hopf algebras with Hopf subalgebras $U_i \subset A_i$, let $(H, R)$ be a quasitriangular Hopf algebra with morphisms $\varphi_i: H \to A_i$, $i = 1,2$, and let $J \in (A_1 \otimes A_2)^{\otimes 2}$ be as in Corollary \ref{lem-Rmatrix-twist}. 

\begin{lem} \label{lem-induction-coiso}
Assume that $R$ satisfies 
\begin{equation} \label{eq-R-U_i-compatibility}
(\varphi_2 \otimes \varphi_1)(R) \in U_2 \otimes U_1 + A_2 \otimes [U_1, U_1].
\end{equation}
Then $U_1 \otimes U_2$ is a right strongly coisotropic subalgebra of $(A_1 \otimes A_2)_\sJ$. 
\end{lem}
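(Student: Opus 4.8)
The plan is to unravel the twisted comultiplication on $A_1 \otimes A_2$ and show that applying it to an element of $U_1 \otimes U_2$ lands in $(U_1 \otimes U_2) \otimes (U_1 \otimes U_2) + (A_1 \otimes A_2) \otimes [U_1 \otimes U_2, U_1 \otimes U_2]$. Recall from Corollary \ref{lem-Rmatrix-twist} that $\Delta_{(\sA_1 \otimes \sA_2)_\sJ} = J^{-1} \Delta_{\sA_1 \otimes \sA_2} J$ with $J = (\varphi \otimes \varphi)(R_{23})$, where $\Delta_{\sA_1 \otimes \sA_2} = \tau_{(23)}(\Delta_{\sA_1} \otimes \Delta_{\sA_2})$. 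Since $U_i$ is a Hopf subalgebra of $A_i$, the untwisted comultiplication already satisfies $\Delta_{\sA_1 \otimes \sA_2}(U_1 \otimes U_2) \subset (U_1 \otimes U_2) \otimes (U_1 \otimes U_2)$. So the whole issue is the conjugation by $J$, and the first step is to analyze where $J^{\pm 1}$ sits relative to the subalgebras.

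The key observation is the following: write $W = U_1 \otimes U_2$ for brevity, so $[W,W]$ is the two-sided ideal of $W$ (hence of $A_1 \otimes A_2$ via the standard inclusion) generated by commutators. The hypothesis \eqref{eq-R-U_i-compatibility} says $(\varphi_2 \otimes \varphi_1)(R) \in U_2 \otimes U_1 + A_2 \otimes [U_1,U_1]$. Transporting this through the embedding that places these factors in slots $2$ and $3$ of $(A_1 \otimes A_2)^{\otimes 2}$, I get that $J = (\varphi\otimes\varphi)(R_{23})$ lies in $(1 \otimes U_2) \otimes (U_1 \otimes 1) + (1 \otimes A_2)\otimes([U_1,U_1]\otimes 1)$, i.e. $J \in W \otimes W + (A_1\otimes A_2)\otimes[W,W]$, and moreover — crucially — $J \equiv (\text{something in } W\otimes W) \pmod{(A_1 \otimes A_2)\otimes [W,W]}$ where that something is $1 \otimes 1 \otimes 1 \otimes 1$ plus lower-order terms... actually more carefully: I want $J$ to be a unit in the quotient algebra $(A_1\otimes A_2)\otimes \big((A_1\otimes A_2)/[W,W]\big)$, or better, I should work modulo the right ideal generated by $[W,W]$ in the right tensor factor. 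The cleanest route is to pass to the quotient map $q: A_1 \otimes A_2 \to (A_1 \otimes A_2)/((A_1\otimes A_2)[W,W])$ — but $[W,W]$ need not be an ideal of $A_1 \otimes A_2$, only of $W$. This is the main obstacle.

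To get around it, I would instead argue directly with a factorization. The point is that $(\varepsilon \otimes I)(R) = 1$ and $(I\otimes\varepsilon)(R)=1$ by \eqref{eqn-eps-I-R}, so $J$ is ``triangular'': modulo terms that vanish under partial counits, $J = 1\otimes\cdots\otimes 1 + (\text{higher terms})$, and in particular $J^{-1}_{21}$ appearing in conjugation has a similar shape. Combined with hypothesis \eqref{eq-R-U_i-compatibility}, for $u = \sum u' \otimes u'' \in W$, I compute $J^{-1}\Delta_{\sA_1\otimes\sA_2}(u)J$ and expand using Sweedler notation: the central term $\Delta_{\sA_1\otimes\sA_2}(u) \in W\otimes W$ contributes the ``diagonal'' piece, and each factor of $J$ or $J^{-1}$ contributes either an element of $W$ (absorbed into the $W\otimes W$ part, since $W\otimes W$ is a subalgebra of $(A_1\otimes A_2)^{\otimes 2}$) or an element with a $[U_1,U_1]$-component in one of the slots. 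A term with a $[U_1,U_1]$-factor sitting in the second (rightmost) tensor factor of $(A_1\otimes A_2)\otimes(A_1\otimes A_2)$ lands in $(A_1\otimes A_2)\otimes[W,W]$; I need to rule out $[U_1,U_1]$-components landing in the \emph{left} factor, and this is forced by the specific placement of indices in $R_{23}$: the $\varphi_1$-image (which carries the $[U_1,U_1]$) sits in slot $3$, i.e. the first component of the \emph{right} tensor factor of $(A_1\otimes A_2)^{\otimes 2}$. So all ``bad'' contributions are confined to the right factor. Carefully bookkeeping this — keeping track of which of the four slots each $U_1,U_2,[U_1,U_1]$ piece occupies through the conjugation — gives $\Delta_{(\sA_1\otimes\sA_2)_\sJ}(W)\subset W\otimes W + (A_1\otimes A_2)\otimes[W,W]$, which is exactly right strong coisotropy.

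I expect the main obstacle to be precisely the bookkeeping of tensor slots under the conjugation $J^{-1}(\cdots)J$: one must verify that no commutator-ideal contribution escapes into the left tensor factor, and that the product of the ``good'' pieces stays inside $W\otimes W$ rather than merely inside the ideal. A secondary technical point is that $[W,W]$ is an ideal of $W$ but not of $A_1\otimes A_2$, so the phrase ``$(A_1\otimes A_2)\otimes[W,W]$'' must be read as a left $(A_1\otimes A_2)$-submodule, and one must check it is preserved under right multiplication by the relevant pieces of $J$ — which follows because those pieces lie in $W$ (acting on the $[W,W]$ slot from within $W$) plus $[U_1,U_1]$-terms (which keep you in the ideal). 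Once the slot-tracking lemma is set up, the verification is a finite and essentially mechanical expansion.
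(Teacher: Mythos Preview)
Your overall strategy is the same as the paper's: since $U_i$ are Hopf subalgebras, $\Delta_{\sA_1\otimes\sA_2}(W)\subset W\otimes W$, and once you know that both $J$ and $J^{-1}$ lie in $W\otimes W + (A_1\otimes A_2)\otimes[W,W]$, the fact that this subspace is a subalgebra of $(A_1\otimes A_2)^{\otimes 2}$ (because $[W,W]$ is a two-sided ideal of $W$) finishes the argument. Your slot-tracking analysis of $J$ is correct, and you correctly identify and resolve the worry that $[W,W]$ is only an ideal of $W$, not of $A_1\otimes A_2$.

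The one genuine gap is your treatment of $J^{-1}$. The counit identities $(\varepsilon\otimes I)(R)=(I\otimes\varepsilon)(R)=1$ do \emph{not} give you control over $J^{-1}$: in a general (non-topological) Hopf algebra you cannot expand $J^{-1}$ as a series in ``higher terms'', and no triangularity argument is available. (Also, the element appearing in the twisted comultiplication is $J^{-1}$, not $J_{21}^{-1}$; the latter is relevant for the twisted $R$-matrix, not for $\Delta_{(\sA_1\otimes\sA_2)_\sJ}$.) The paper handles this in one line: $R^{-1}=(S_\sH\otimes I_\sH)(R)$. Since $\varphi_2$ is a Hopf algebra morphism it intertwines antipodes, and since $U_2$ is a Hopf subalgebra one has $S_{\sA_2}(U_2)\subset U_2$; hence
\[
(\varphi_2\otimes\varphi_1)(R^{-1})=(S_{\sA_2}\otimes I_{\sA_1})\bigl((\varphi_2\otimes\varphi_1)(R)\bigr)\in U_2\otimes U_1 + A_2\otimes[U_1,U_1],
\]
so $J^{-1}$ satisfies the same containment as $J$. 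With this in hand, your bookkeeping argument goes through verbatim and no series expansion is needed.
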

\begin{proof}
The proof is straightforward as $R^{-1} = (S_\sH \otimes 1_\sH)(R)$ also satisfies \eqref{eq-R-U_i-compatibility}. 
\end{proof}

\begin{pro} \label{lem-hopf-strong}
Let $U \subset H$ be a Hopf subalgebra and suppose that 
\begin{equation} \label{R-U-compatibility}
R \in U \otimes U + H \otimes [U,U]. 
\end{equation}
Then for any $m \geq 1$, $U^{\otimes m}$ is a right strongly coisotropic subalgebra of $H_\sR^{(m)}$. 
\end{pro}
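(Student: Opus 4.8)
The plan is to prove this by induction on $m$, using the inductive description of $H_\sR^{(m)}$ as a twist of $H_\sR^{(m-1)} \otimes H$, together with Lemma \ref{lem-induction-coiso}. The base case $m = 1$ is just the hypothesis \eqref{R-U-compatibility}, since $H_\sR^{(1)} = H$ and a Hopf subalgebra $U$ with $\Delta_\sH(U) \subset U \otimes U \subset U \otimes U + H \otimes [U,U]$ is trivially right strongly coisotropic.

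For the inductive step, suppose $U^{\otimes (m-1)}$ is a right strongly coisotropic subalgebra of $H_\sR^{(m-1)}$. Recall from $\S$\ref{subsec-A^(n)_R} that $H_\sR^{(m)}$ is the twist of $H_\sR^{(m-1)} \otimes H$ by the element $(\Delta_\sH^{(m-1)} \otimes I_\sH \otimes \Delta_\sH^{(m-1)} \otimes I_\sH)(R_{23})$, i.e. by $J = (\varphi \otimes \varphi)(R_{23})$ in the notation of Corollary \ref{lem-Rmatrix-twist} with $A_1 = H_\sR^{(m-1)}$, $A_2 = H$, $\varphi_1 = \Delta_\sH^{(m-1)}$, and $\varphi_2 = I_\sH$. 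I would apply Lemma \ref{lem-induction-coiso} with $U_1 = U^{\otimes(m-1)} \subset H_\sR^{(m-1)}$ and $U_2 = U \subset H$; note that $\Delta_\sH^{(m-1)}$ and $I_\sH$ do restrict to algebra maps $U \to U^{\otimes(m-1)}$ and $U \to U$ respectively, since $U$ is a Hopf subalgebra. The conclusion would then be that $U^{\otimes(m-1)} \otimes U = U^{\otimes m}$ is right strongly coisotropic in $(H_\sR^{(m-1)} \otimes H)_\sJ = H_\sR^{(m)}$, completing the induction.

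The one genuine thing to check — and the step I expect to be the main obstacle, though it is not deep — is the hypothesis \eqref{eq-R-U_i-compatibility} of Lemma \ref{lem-induction-coiso} in this situation, namely that
$$
(\varphi_2 \otimes \varphi_1)(R) = (I_\sH \otimes \Delta_\sH^{(m-1)})(R) \in H \otimes U^{\otimes(m-1)} + H \otimes [U^{\otimes(m-1)}, U^{\otimes(m-1)}].
$$
Write $R = \sum R' \otimes R'' \in U \otimes U + H \otimes [U,U]$ by \eqref{R-U-compatibility}. For the terms with $R' \otimes R'' \in U \otimes U$ one has $(I_\sH \otimes \Delta_\sH^{(m-1)})(R' \otimes R'') = R' \otimes \Delta_\sH^{(m-1)}(R'')$, and since $U$ is a Hopf subalgebra, $\Delta_\sH^{(m-1)}(R'') \in U^{\otimes(m-1)}$, so these land in $H \otimes U^{\otimes(m-1)}$. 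For the terms with $R'' \in [U,U]$ one needs that $\Delta_\sH^{(m-1)}([U,U]) \subset [U^{\otimes(m-1)}, U^{\otimes(m-1)}]$; this follows because $\Delta_\sH^{(m-1)}\!\!\mid_U : U \to U^{\otimes(m-1)}$ is an algebra homomorphism, so it carries the commutator ideal $[U,U]$ into the commutator ideal of its image, hence into $[U^{\otimes(m-1)}, U^{\otimes(m-1)}]$. Thus \eqref{eq-R-U_i-compatibility} holds, and the induction goes through. (One should also observe in passing that $R^{-1} = (S_\sH \otimes I_\sH)(R)$ again satisfies \eqref{R-U-compatibility}, as needed inside Lemma \ref{lem-induction-coiso}; this is immediate since $S_\sH(U) \subset U$ and $S_\sH([U,U]) \subset [U,U]$.)
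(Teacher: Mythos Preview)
Your proof is correct and follows essentially the same inductive approach as the paper: induct on $m$, apply Lemma \ref{lem-induction-coiso} with $A_1 = H_\sR^{(m-1)}$, $A_2 = H$, $\varphi_1 = \Delta_\sH^{(m-1)}$, $\varphi_2 = I_\sH$, and use that $\Delta_\sH^{(m-1)}([U,U]) \subset [U^{\otimes(m-1)}, U^{\otimes(m-1)}]$ to verify \eqref{eq-R-U_i-compatibility}. One small slip: in your displayed target for \eqref{eq-R-U_i-compatibility} the first summand should read $U \otimes U^{\otimes(m-1)}$ (i.e.\ $U_2 \otimes U_1$) rather than $H \otimes U^{\otimes(m-1)}$; your subsequent verification in fact establishes this stronger containment, since $R' \in U$ for the $U \otimes U$ terms.
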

\begin{proof}
The Proposition is trivial for $m=1$, so assume it true for an $m \geq 1$. As $\Delta_\sH^{(m)}([U,U]) \!\subset [U^{\otimes m}, U^{\otimes m}]$, one applies Lemma \ref{lem-induction-coiso} with  $A_1 = H_\sR^{(m)}$,  $A_2 = H$, $\varphi_1 = \Delta_\sH^{(m)}$, $\varphi_2 = 1_\sH$ to conclude that $U^{\otimes m+1}$ is a right strongly coisotropic subalgebra of  $H_\sR^{(m+1)}$. 
\end{proof}

\begin{exa} \label{ex-U_hb-strong-co}
Denote by $H$ the  QUE algebra $U_\hslash(\g)$ defined in $\S$\ref{exa-drin-jimbo} and let $U$ be the Hopf subalgebra generated by $\{h_{\alpha_i}, e_{\alpha_i}\}_{i=1}^k$. The standard quasitriangular $R$-matrix on $H$ satisfies \eqref{R-U-compatibility} (see e.g \cite[Chapter 8]{chari-pressley}) and for $m \geq 1$, one obtains another construction of the DQ algebra $\IC_\hslash[(N \backslash G)^m] $. 

Indeed, writing $\H = \IC_\hslash[G]$, the map $\mu^{(m)}: \H^{\otimes m} \otimes \H^{\otimes m} \to \H^{\otimes m}$, 
$$
\mu^{(m)}(f_1 \otimes f_2) = \mu_{\scriptscriptstyle \H^{\otimes m}}(\Twi^m(R) \cdot (f_1 \otimes f_2) \cdot \Twi^m(R)), \;\; f_i \in \H^{\otimes m},
$$
where $\Twi^m(R)$ is defined in \eqref{J^n-closed}, is an associative product on $\H^{\otimes m}$ such that 
$$
\H^{(m)} = (\H^{\otimes m}, \mu^{(m)}, 1_{\scriptscriptstyle \H}^{\otimes m})
$$ 
is an $H^{(m)}_\sR$-bimodule algebra. One has $\P_{\scriptscriptstyle \H^{(m)}}(U^{\otimes m}) = (\P_+)^m$ and 
$$
(\H^{(m)})^{\scriptscriptstyle U^{\otimes m}} = \bigoplus_{\ulamb \in (\P_+)^m} \IC_\hslash[G]^{\ulamb},
$$
where for $\ulamb = (\lambda_1, \ldots, \lambda_m) \in (\P_+)^m$, 
$$
\IC_\hslash[G]^{\ulamb} = \IC_\hslash[G]^{\lambda_1} \otimes \cdots \otimes \IC_\hslash[G]^{\lambda_m}.
$$
By Proposition \ref{lem-hopf-strong}, $U^{\otimes m}$ is a right strongly coisotropic subalgebra of $H_\sR^{(m)}$, so $(\H^{(m)})^{\scriptscriptstyle U^{\otimes m}}$ is a $(\P_+)^m$-graded subalgebra of $\H^{(m)}$, easily seen to be be isomorphic to $\IC_\hslash[(N \backslash G)^m] $. 

\hfill $\diamond$
\end{exa}

\subsection{Relation with the quantum sections of Ciccoli, Fioresi, Gavarini} \label{subsec-3italians}

Let $\H$ be a Hopf algebra, let $\U$ be a coalgebra with a right action of $\H$, and let $p: \H \to \U$ be an $\H$-equivariant surjective morphism of coalgebras. Given a {\it prequantum section} \cite[Definition 3.5]{3italians}, i.e a non-zero element $d \in \H$ satisfying 
\begin{equation} \label{eq-prequant}
\Delta_{\scriptscriptstyle \H}(d) \in d \otimes d + \ker(p) \otimes \H, 
\end{equation}
consider the subspace $\H^d = \oplus_{n \geq 0} \H_n^d$ of $\H$, where 
$$
\H_n^d = \{l \in \H : \Delta_{\scriptscriptstyle \H}(l) \in d^n \otimes l + \ker(p) \otimes \H \}.
$$
Then $d$ is said to be a {\it quantum section} if $\H^d$ is a graded subalgebra of $\H$. 

Let $(G, \piG)$ be a Poisson Lie group and $Q$ a closed coisotropic subgroup of $(G, \piG)$. When $\H$ is the quantized ring of regular functions on $(G, \piG)$, $\U$ the quantized coalgebra of functions on $Q$ and $p$ the restriction map, Ciccoli, Fioresi, and Gavarini \cite{3italians} interpret $\H^d$ as being the quantization of the homogeneous coordinate ring of an ample line bundle on $Q \backslash G$. 

Assume now that $\H$ is the dual Hopf algebra of a Hopf algebra $H$. That is, for any finite dimensional left representation $V$ of $H$, one has the matrix coefficients 
$$
c_{\xi, v}^V: H \to \IK, \hs c_{\xi, v}^V(h) = \la h \cdot \xi, \: v \ra, \hs h \in H, 
$$
defined for any $v \in V$ and $\xi \in V^*$, and $\H$ is the Hopf algebra consisting of all the matrix coefficients of $H$. Let $U \subset H$ be a subalgebra with $\Delta_\sH(U) \subset U \otimes H$, assume that $\U = \{c_{\xi, v}^V \mid_U : \; c_{\xi, v}^V \in \H\} \subset U^*$, and that $p: \H \to \U$ is the restriction of the matrix coefficients to $U$. Thus $p$ is $\H$-equivariant, where $\H$ acts on $\U$ by $p(f) \cdot l = p(fl)$, for $l,f \in \H$. 

Now if in addition $U$ is left strongly coisotropic, any prequantum section $d \in \H$ is automatically a quantum section. Indeed, by \eqref{eq-prequant}, $p(d)$ is a character on $U$, and since $d \in \H^d_1$, one has $p(d) \in \P_{\scriptscriptstyle \H}(U)$. Equipping $\Ch(U)$ with the monoid multiplication defined in Proposition \ref{thm-strong-coiso}, it is clear that $p(d)^n = p(d^n)$ for all  $n \geq 1$, and under the right action of $H$ on $\H$ given by 
$$
c_{\xi, v}^V \cdot h = c_{\xi, \: S_\sH(h) \cdot v}^V, \hs c_{\xi, v}^V \in \H,  
$$
one has 
$$
\H_n^d = \H^{p(d)^n}. 
$$
By Proposition \ref{thm-strong-coiso}, $\H^U$ is a $\P_{\scriptscriptstyle \H}(U)$-graded subalgebra of $\H$, hence  
$$
\H^d = \bigoplus_{n \geq 0} \H^{p(d)^n} \subset \H^U
$$
is a graded subalgebra. Thus strong coisotropicity can  be thought of as a condition guaranteeing the quantizability of any homogeneous coordinate ring of an ample line bundle on a homogenous space of a Poisson Lie group. 

\begin{exa}
Returning to Example \ref{ex-U_hb-strong-co} above, $\H^{(m)}$ is the dual Hopf algebra of $H^{(m)}_\sR$, and for $\ulamb = (\lambda_1, \ldots, \lambda_m) \in (\P_+)^m$, 
$$
S_\hslash(\ulamb) = \bigoplus_{n \geq 0}  \IC_\hslash[G]^{n \lambda_1} \otimes \cdots \otimes \IC_\hslash[G]^{n \lambda_m} \subset \H^{(m)}
$$
is a graded algebra quantizing the graded Poisson algebra $(S(\ulamb), \{ \:, \: \}^{(m)})$ corresponding to the flat Poisson connection $\nabla^{\ulamb}$ on $L(\ulamb)$. 
\hfill $\diamond$
\end{exa}


\bibliographystyle{plain} 

\end{document}